\renewcommand{\thetheoremName}
\def\sign{\mathrm{sign}}
\def\Z{\mathbf{Z}}
\def\R{\mathbf{R}}
\def\C{\mathbf{C}}
\def\eps{\epsilon}
\def\A{\mathcal{A}}
\DeclareMathOperator\im{im}
\newtheorem{theorem}{Theorem}[section]
\newtheorem{df}[theorem]{Definition}
\newtheorem{lemma}[theorem]{Lemma}
\newtheorem{conj}{Conjecture}
\newtheorem{remark}[theorem]{Remark}
\def\tareesidedbox#1{\setbox0=\hbox{$#1$}\dimen0=\wd0 \advance\dimen0 by3pt\rlap{\hbox{\vrule height9pt width.4pt depth2pt \kern-.4pt\vrule height9.4pt width\dimen0 depth-9pt\kern-.4pt \vrule height9pt width.4pt depth2pt}} \relax \hbox to\dimen0{\hss$#1$\hss}}
\def\ho#1{\tareesidedbox{#1}}
\newcommand{\UZ}{U \kern-.1em{Z}}
\newcommand{\VF}{V \kern-.07em{F}}
\def\Vol{\mathrm{Vol}}
\begin{document}

\title{Counting Perron numbers by absolute value}
\author{Frank Calegari and Zili Huang}
\thanks{The authors were supported in part by NSF  Grant
  DMS-1404620.}
  \subjclass[2010]{11R06, 11R09, 11C08, 30C10.}

\begin{abstract} We count various classes of algebraic integers of fixed degree by their largest absolute value.
The classes of integers considered include all algebraic integers,
Perron numbers, totally real integers, and totally complex integers. We give qualitative and quantitative results
concerning the distribution of Perron numbers, answering in part a question of W.~Thurston~\cite{Thurston}. 
\end{abstract}

\maketitle

\maketitle

\section{Introduction}

The goal of this paper is to address several questions concerning Perron numbers
suggested by a recent preprint of~W.~Thurston~\cite{Thurston}.
An algebraic integer $\alpha$ is a \emph{Perron number} if it has larger absolute value than
any of its Galois conjugates. How many Perron numbers are there? Although there
are numerous ways to order and count algebraic integers, in this context it seems
most natural to count (in  fixed degree) by absolute value, and this is what we do.
As well as counting Perron algebraic integers, we  count all algebraic integers.
Let $\A_N$ denote the set of algebraic integers of degree~$N$. 
For $\alpha \in \A_N$, let $\ho{\alpha}$ --- the \emph{house} of $\alpha$ --- denote the largest
absolute value of any conjugate~$\sigma \alpha$ of $\alpha$. An argument of
Kronecker shows that there are only finitely many elements of~$\A_N$ with house at most~$X$.
Specifically, any algebraic integer of degree $N$ all of whose conjugates' absolute values
is bounded by $X$ is the root of a polynomial whose coefficients are bounded strictly in terms of $X$ and $N$, and hence there
are only finitely many such $\alpha$.
Let $\A^{+}_N \subset \A_N$ denote the subset of totally real algebraic integers of degree~$N$.
Let $\A^{-}_{N} \subset \A_{N}$ denote the subset of algebraic integers of degree~$N$ which are totally
complex (this is empty unless~$N$ is even).  Let $\A^{P}_{N} \subset \A_N$ denote the algebraic integers of degree~$N$ which
are Galois conjugates to a Perron number.
Our first result gives an estimate for the sizes of these sets.

\begin{theorem} \label{thm:one} \label{theorem:count} As $X \rightarrow \infty$, 
$$\displaystyle{|\A^*_N(X)| = X^{\frac{N(N+1)}{2}} D^*_{N}}\left(1 + O\left(\frac{1}{X}\right)\right),$$
where $*$ is either unadorned, $P$, $+$, or $-$ when $N$ is even, and the 
constants $D^*_N$ are given as follows:
$$D_{N} = 
\displaystyle{ \prod_{k=0}^{m-1} \left(\frac{k!^2 2^{2k+1}}{(2k+1)!}\right)^2},  N = 2m; \quad
\ D_{N} = \displaystyle{\left(\frac{m!^2 2^{2m+1}}{(2m+1)!}\right) \prod_{k=0}^{m-1} \left(\frac{k!^2 2^{2k+1}}{(2k+1)!}\right)^2},
N = 2m+1,$$
$$
D^{P}_N = \frac{D_N}{N+1}, N = 2m; \quad  \quad  \quad
D^{P}_N = \frac{D_N}{N}, N = 2m+1,$$
$$D^{+}_N =  \prod_{k=0}^{N-1} \frac{2^{k+1} k!^2}{(2k+1)!}, \quad  \quad  \quad D^{-}_{2N} = \frac{2^{2N(N-1)} (2N)!}{N!^2} D^{+}_{2N}.$$
\end{theorem}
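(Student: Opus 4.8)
The plan is to translate the counting problem into volume computations by lattice-point counting, and then evaluate the volumes by passing from coefficients to roots and invoking Selberg-type integrals.

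First I would identify an algebraic integer with its minimal polynomial. Up to the contribution of reducible polynomials --- which is $O(X^{N(N+1)/2-1})$, since $N_1(N_1+1)+N_2(N_2+1)<N(N+1)$ whenever $N_1+N_2=N$ with $N_1,N_2\ge1$ --- the quantity $|\A^*_N(X)|$ is the number of monic \emph{integer} polynomials $f(x)=x^N+a_{N-1}x^{N-1}+\cdots+a_0$ all of whose complex roots lie in the closed disc of radius $X$ and which satisfy the condition attached to $*$ (all roots real for $*=+$; no root real for $*=-$; a unique root of maximal absolute value for $*=P$, that root then being real since the roots form a Galois-stable set). Writing $a_{N-i}=X^{i}b_{N-i}$, these $f$ are exactly the integer points of $D(X)\cdot E^*_N$, where $E^*_N\subset\R^N$ is the compact semialgebraic set of coefficient vectors of monic real degree-$N$ polynomials with all roots in the closed unit disc meeting the condition for $*$, and $D(X)=\diag(X,X^2,\dots,X^N)$. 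Since $\partial E^*_N$ lies in the union of the discriminant locus and the locus where some root has absolute value $1$, it is semialgebraic of dimension $\le N-1$; applying Davenport's lemma to this anisotropic dilation, and noting that the largest product of $N-1$ of the factors $X,X^2,\dots,X^N$ is $X^{N(N+1)/2-1}$, the number of such integer points is $\mathrm{vol}(E^*_N)\,X^{N(N+1)/2}+O(X^{N(N+1)/2-1})$. It remains to show $D^*_N=\mathrm{vol}(E^*_N)$.

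Next I would compute each volume by changing variables from the coefficients to the roots. If a monic real polynomial has $r$ real roots $t_i$ and $s$ pairs of complex-conjugate roots $z_k,\bar z_k$ (so $r+2s=N$), the Jacobian of the map from $(t_i;\,\mathrm{Re}\,z_k,\mathrm{Im}\,z_k)$ to $(a_0,\dots,a_{N-1})$ is, up to sign and a power of $2$, the Vandermonde product $\prod_{a<b}|\theta_a-\theta_b|$ in all $N$ roots $\theta_a$; quotienting by the $r!\,s!$ relabellings gives
\[
D^*_N=\sum_{r+2s=N}\frac{1}{r!\,s!}\int|\Delta(\theta_1,\dots,\theta_N)|\;d(\mathrm{roots}),
\]
with the sum and integration domains cut down according to $*$ (each conjugate pair contributing one integration over a half-disc). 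For $*=+$ only $(r,s)=(N,0)$ survives, giving $\tfrac1{N!}\int_{[-1,1]^N}\prod_{i<j}|t_i-t_j|\,dt$, a Selberg integral with $(\alpha,\beta,\gamma)=(1,1,\tfrac12)$ rescaled from $[0,1]$, whose value is $\prod_{k=0}^{N-1}\tfrac{2^{k+1}k!^2}{(2k+1)!}=D^+_N$. For $*=-$ (with $N=2m$) only $(r,s)=(0,m)$ survives, giving $\tfrac1{m!}$ times a complex Selberg-type integral over $m$ points of the half-disc, which one evaluates by a standard orthogonal-polynomial computation (or cites) to obtain $D^-_{2m}$; comparing the two product formulas then exhibits $D^-_{2m}=\tfrac{2^{2m(m-1)}(2m)!}{m!^2}D^+_{2m}$. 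For $*$ unadorned one sums the mixed real--complex Selberg integrals over all signatures $r+2s=N$; each has a product evaluation in the Selberg--Aomoto family, and the sum collapses to $\prod_{k=0}^{m-1}(\cdots)^2$ (times the extra factor for odd $N$) --- in effect the evaluation of the volume of the set of real monic polynomials with all roots in the unit disc carried out in prior work, which one may cite or reprove.

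For the Perron constant I would peel off the dominant root: a polynomial with a unique root $\alpha$ of maximal absolute value ($\alpha$ real, $0<|\alpha|\le1$) factors uniquely as $(x-\alpha)g(x)$ with $g$ monic of degree $N-1$ and all roots of absolute value $<|\alpha|$, and the Jacobian of $(\alpha,g)\mapsto(x-\alpha)g$ is $|g(\alpha)|=|\mathrm{Res}(x-\alpha,g)|$. Rescaling the roots of $g$ by $1/|\alpha|$ extracts a factor $|\alpha|^{(N-1)(N+2)/2}$ from $|g(\alpha)|\,dg$, and $\int_{-1}^{1}|\alpha|^{(N-1)(N+2)/2}\,d\alpha=\tfrac{4}{N(N+1)}$, so
\[
D^P_N=\mathrm{vol}(E^P_N)=\frac{4}{N(N+1)}\int_{\mathcal Q_{N-1}}g(1)\,dg,
\]
where $\mathcal Q_{N-1}\subset\R^{N-1}$ is the coefficient set of monic real degree-$(N-1)$ polynomials with all roots in the closed unit disc. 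In root coordinates $g(1)\,|\Delta(\mathrm{roots\ of\ }g)|=|\Delta(1,\mathrm{roots\ of\ }g)|$, so the integral is the same Selberg-type sum that computes $D_N$ but with one real integration variable pinned at the boundary point $1$; evaluating it and dividing by $D_N$ yields $\int_{\mathcal Q_{N-1}}g(1)\,dg=\tfrac N4 D_N$ for $N$ even and $\tfrac{N+1}{4}D_N$ for $N$ odd, hence $D^P_N=\tfrac{D_N}{N+1}$ and $\tfrac{D_N}{N}$ respectively (I have verified this against the explicit volumes for $N=2,3$). The lattice-point estimate is routine given the anisotropic Davenport bound and semialgebraicity of each $E^*_N$ (for $*=P$ one should present the defining conditions without the existential quantifier over roots, e.g.\ via subresultants). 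The real obstacle is the explicit evaluation and collapse of the Selberg-type integrals into the stated products: the totally real case is classical, but the totally complex case, the summation over signatures for the unadorned $D_N$, and the ``one point pinned at the boundary'' integral governing $D^P_N$ each require the right member of the Selberg/Aomoto/Forrester family together with a nontrivial simplification, and it is the comparison between the last of these and $D_N$ that produces the clean ratios $1/(N+1)$ and $1/N$.
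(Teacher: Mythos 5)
Your reduction of the count to volumes coincides with the paper's: identify $\A^*_N(X)$ with lattice points of the image of the unit-house coefficient region under $\diag(X,X^2,\dots,X^N)$, apply Davenport's lemma (error governed by the largest coordinate projection, of size $X^{N(N+1)/2-1}$), and discard reducible polynomials as lower order; likewise your Perron step --- peel off the dominant real root with Jacobian $|g(\alpha)|$, rescale to extract $\int_{-1}^{1}|t|^{(N-1)(N+2)/2}dt=\tfrac{4}{N(N+1)}$ times $\int g(1)\,dg$ --- is exactly the paper's map $\Omega_{N-1}\times[-1,1]\to\Omega^P_N$, and the totally real constant via Selberg's integral is also the paper's argument. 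The genuine gap is that the constants themselves, which are the substantive content of the theorem, are never evaluated. For the unadorned $D_N$ and for the pinned integral $\int g(1)\,dg$ you propose summing mixed real--complex Selberg integrals over signatures, asserting that each ``has a product evaluation in the Selberg--Aomoto family'' and that the sum ``collapses.'' The paper shows this mechanism is not available: already $\int_{\Omega_3}|\Delta_P|=\tfrac{32}{135}+\tfrac{41\pi}{15}$, and the signature-by-signature volumes $\Vol(\Omega_{R,S})$ have no known product form. The paper instead avoids the root decomposition entirely for these two quantities, using Fam's slicing $\Omega_N(a_N)=T_{N-1}\Omega_{N-1}$ with $|\det T_{N-1}|$ a beta-type integrand to set up a recursion evaluating $C_N(\alpha,T)$ (Theorem~\ref{theorem:main}); both $D_N$ and $C_{N-1}(1,1)$ --- hence the clean ratios $1/N$ and $1/(N+1)$ of Lemma~\ref{lemma:perronintegral} --- drop out of that closed form. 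Asserting the value of the pinned integral after checking $N=2,3$ is not a proof, and you yourself flag it as ``the real obstacle.''

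The totally complex constant is in the same condition but worse: you defer it to ``a standard orthogonal-polynomial computation (or cites),'' but the weight $\prod_i|z_i-\overline{z}_i|\prod_{i<j}|z_i-z_j|^2|z_i-\overline{z}_j|^2$ on the unit disc is not a $\beta=2$ determinantal ensemble with a textbook evaluation, and the resulting identity $D^-_{2N}=\tfrac{2^{2N(N-1)}(2N)!}{N!^2}D^+_{2N}$ settles a conjecture of Akiyama, so it cannot simply be cited. The paper's proof requires a specific device: expand the Vandermonde keeping the $\sign(\im z_i)$ factors, integrate monomials over $B(1)$ (Lemma~\ref{lemma:smallint}), exploit the pair-swapping symmetry to reduce the sum over $S_{2N}$ to a sum over $S_N$, and recognize the result as a Cauchy determinant, whose product evaluation yields Theorem~\ref{theorem:complex}. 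Without this (or an equivalent Pfaffian/skew-orthogonal computation carried out in detail), and without an actual evaluation of $D_N$ and of $C_{N-1}(1,1)$, your proposal is a correct framing --- identical in outline to the paper --- with the quantitative heart missing.
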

For example, given a ``random'' algebraic integer, the probability that
$\alpha$ is (conjugate to) a Perron algebraic integer is $1/N$ if $N$ is odd and $1/(N+1)$ if~$N$ is even.
Note that the last answer with the ratio $D_{2N}^-/D_{2N}^+$ proves Conjecture 5.2 in~\cite{Akiyama}.
This theorem reduces to understanding various integrals over the region
$\Omega_N$ in $\R^N$  which parametrize monic polynomials
 all of whose roots have absolute value at most~$1$.
If one imposes conditions on the signature, then one obtains corresponding
 regions $\Omega_{R,S} \subset \Omega_{N}$ with $N = R+2S$, the constants $D^{+}_N$ and $D^{-}_{2N}$ 
 are then the volumes of $\Omega_{N,0}$ and $\Omega_{0,N}$ respectively.
 There is a natural decomposition
$$\Omega_N = \coprod_{R+2S = N} \Omega_{R,S}.$$
Beyond counting algebraic integers in these classes, it is also of interest to try to understand what a ``typical''
such element is, under the constraint that the house of $\alpha$ is bounded by a fixed constant~$X$,
which leads us towards our next result.

\subsection{A question of Thurston} \label{section:thurston}
Thurston asked~\cite{Thurston}
whether one could understand the distribution of Perron numbers subject to the constraint that their absolute
values are bounded by a fixed real number~$X$. 
Recall that
a Perron algebraic integer is a real algebraic integer $\alpha$ with
$|\alpha| = \ho{\alpha}$ whose absolute is strictly larger than all its Galois conjugates. We say that a polynomial
with coefficients in $\R$ is Perron if it has a unique (necessarily real) largest (in terms of absolute value) root.
Usually one insists that a Perron number is a \emph{positive} real number, but with our definition $\alpha$
is Perron if and only if $- \alpha$ is also Perron. (The only change to the asymptotics is a factor of~$2$.)
We explain in section~\ref{section:perron} how to count Perron algebraic integers.
 In one experiment, Thurston attempted to model random polynomials
whose largest root is $\le 5$ by taking polynomials of degree~$21$ all of whose coefficients lie in the interval $[-5,5]$. The corresponding
roots showed a tendency to cluster the ratio of their absolute values to the largest root away from $|z| = 1$; we
include his figure here as figure~$1$.
\begin{figure}[!ht] \label{fig:one}
\begin{center}
  \includegraphics[width=60mm]{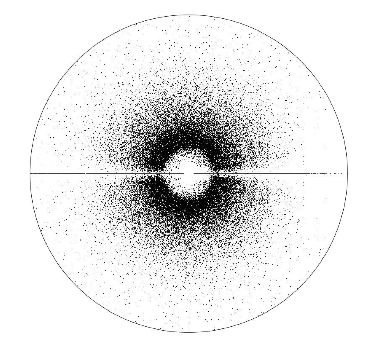}
  \end{center}
\caption{A plot from~\cite{Thurston} showing
the normalized  roots $\sigma \alpha/\alpha$ of the minimal polynomials for $5932$ degree~$21$ 
Perron numbers $\alpha$,
obtained by sampling 20,000 monic degree~$21$ polynomials
with integer coefficients in $[-5,5]$ and keeping those that have a root of absolute value at most~$5$ which is larger
than all other roots.}
\end{figure}
 However, we shall
explain why this picture is not accurate  representation of the entire space of
Perron polynomials.
As a point of comparison, 
Thurston sampled polynomials over a space with $11^{21}$ lattice points and volume $10^{21}$. 
On the other hand,  let  $\Omega^{P}_{21} \subset \Omega_{21}$ be the region
consisting of polynomials
with a unique largest root, and consider the scaled version of this space where the roots
are allowed to have absolute value at most~$5$. Then this region has volume
$$5^{210} \cdot D^{P}_{21} = \frac{2^{189} 5^{198}}{3^{24} 7^{10} 11^{11} 13^{9} 17^{5} 19^{3}}
\sim 8.308 \times 10^{143}.$$
Hence Thurston's samples are taken from a region which represents less than 
one $10^{123}$th of the entire space of Perron polynomials. As another illustration, the average value
 of $|a_{21}|$ over the correct region is 
 $$ 5^{21}\cdot \frac{88179}{524288}=\frac{3 \cdot 5^{21} \cdot 7 \cdot 13 \cdot 17 \cdot 19}{2^{19}} \sim 8.020 \times 10^{13},$$
which is not anywhere close to being in $[-5,5]$. Indeed, this value might \emph{a priori}
be considered surprisingly large,
given that the absoute maximum of the constant term $|a_{21}|$ is $5^{21} \sim 4.768 \times 10^{14}$. 
We should make clear that Thurston made no claims
that his experiment produced a faithful representation of $\Omega^{P}_{21}$,  and
he explicitly mentions the coefficients of a typical member
 of $\Omega^{P}_{21}$ appears to be ``much larger'' than~$5$.
Indeed, one
of the problems he posed is to formulate a good method for sampling ``randomly'' in this space. 
A natural approach to the latter question is to use a random walk Metropolis--Hastings algorithm. 
Figure~$2$, produced via such a random walk algorithm,
  is in agreement
with our theoretical results, such as Theorem~\ref{theorem:cluster} below.
The ``ring'' structure evident in Thurston's picture (of radius approximately~$1/5$)  is a consequence of the  fact that polynomials
with (suitably) small coefficients have roots which tend to cluster uniformly around the disc of radius one. This follows
in the radial direction by
a theorem of Erd\"{o}s and Tur\'{a}n~\cite{Erdos},  and for the absolute values from~\cite{Hughes}.
In contrast, the reality is that the conjugates of Perron polynomials will cluster around the boundary,
which is our second result:

\begin{theorem} \label{theorem:cluster} As $N \rightarrow \infty$, the  roots of a random polynomial in~$\Omega_N$
or~$\Omega^P_N$ are distributed uniformly about the unit circle. 
\end{theorem}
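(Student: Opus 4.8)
The plan is to show that for a polynomial drawn from the uniform measure on $\Omega_N$ (or $\Omega^P_N$), with probability tending to $1$ as $N \to \infty$ its empirical root measure converges weakly to the normalized arc-length measure on the unit circle. The natural tool here is the Erd\H{o}s--Tur\'an--type inequality: if $p(z) = \sum_{j=0}^N a_j z^j$ is monic of degree $N$, then the discrepancy between the angular distribution of its roots and the uniform distribution is bounded (up to absolute constants) by $\sqrt{\tfrac{1}{N}\log\!\big(\tfrac{1}{\sqrt{|a_0|}}\sum_j |a_j|\big)}$, or a similar expression. So it suffices to show that, with high probability under the uniform measure on $\Omega_N$, the coefficients $a_j$ are not too small — concretely that $|a_0| = |a_N^{(\mathrm{const})}|$ is bounded below by something like $e^{-o(N)}$ and $\sum_j |a_j| = e^{o(N)}$ — and then the Erd\H{o}s--Tur\'an bound forces angular equidistribution.

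The key steps, in order: (1) Recall that $\Omega_N \subset \R^N$ is the set of coefficient vectors of monic degree-$N$ real polynomials all of whose roots lie in the closed unit disc; its volume is finite by the computations underlying Theorem~\ref{thm:one}. (2) Control the constant term: since every root has modulus $\le 1$, automatically $|a_0| \le 1$ and $\sum_j |a_j| \le 2^N$, so the only real danger in the Erd\H{o}s--Tur\'an bound is $|a_0|$ being exponentially small. I would estimate the uniform measure of the ``bad'' set $\{|a_0| < e^{-\delta N}\}$ and show it is $o(1)$; this is where the explicit volume/density information from the proof of Theorem~\ref{thm:one} (the Vandermonde-type Jacobian, and the product structure of $D_N$, $D^P_N$) gets used, since the constant term is (up to sign) the product of the roots and one needs that the measure does not concentrate on polynomials with a root very close to the origin. (3) Apply the Erd\H{o}s--Tur\'an inequality to conclude angular equidistribution with high probability. (4) For the radial direction, invoke that for polynomials with coefficients of size $e^{o(N)}$ the roots also cluster near $|z|=1$ — this is exactly the companion statement to Erd\H{o}s--Tur\'an (the result of Hughes et al.\ cited in the introduction), and combined with the confinement $|z|\le 1$ built into $\Omega_N$ it pins the modulus to $1$; together (3) and (4) give equidistribution on the unit circle itself. (5) For $\Omega^P_N$: since $D^P_N = D_N/(N+1)$ (resp. $D_N/N$), the Perron region is a definite fraction $\asymp 1/N$ of $\Omega_N$, so a ``bad'' event of probability $o(1/N)$ on $\Omega_N$ is still $o(1)$ on $\Omega^P_N$; one must upgrade the estimate in step (2) to $o(1/N)$, or argue directly on the Perron region using the same Jacobian with the extra largest-root constraint (which only removes mass near the region where two top roots collide, not near the origin).

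The main obstacle is step~(2): getting a quantitative lower bound, valid with probability $1 - o(1)$ (indeed $1 - o(1/N)$ for the Perron case), on the constant term $|a_0| = \prod |z_i|$ under the uniform measure on $\Omega_N$. Equivalently, one must rule out that a uniformly random such polynomial has an unusually small root. I expect this to follow from the explicit density: after the standard change of variables from coefficients to roots, the measure acquires a factor $|\Delta(z)|$ (the absolute discriminant / Vandermonde), and one integrates over roots in the disc; the probability that $\min_i |z_i| < \epsilon$ is then controllable because $|\Delta(z)|$ does not blow up as a root approaches $0$ and the remaining volume is comparable to the whole. Making the exponents match — i.e.\ showing the exceptional probability is genuinely $e^{-c N}$ or at least $o(1/N)$ rather than merely $o(1)$ — is the delicate point, and is presumably where one leans hardest on the structure exposed in the proof of Theorem~\ref{thm:one}.
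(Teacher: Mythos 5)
Your overall strategy---control the Erd\H{o}s--Tur\'an/Hughes discrepancy quantity with high probability under the uniform measure, then transfer to $\Omega^P_N$ using the fact that its volume is a fraction $\asymp 1/N$ of that of $\Omega_N$---is the same as the paper's, and your treatment of the constant term and of the Perron case is close to what the paper actually does (it gets $E(\Omega_N,-\log|P(0)|)=\tfrac12\log N+O(1)$ from the explicit moment formula of \S\ref{section:moments}, and handles $\Omega^P_N$ exactly by the volume-ratio remark). The genuine gap is in your step (2): you claim that the trivial bound $\sum_j|a_j|\le 2^N$ renders the coefficient-sum term harmless, so that ``the only real danger is $|a_0|$ being exponentially small.'' That is not so. The Erd\H{o}s--Tur\'an bound (and Theorem~1 of~\cite{Hughes}) controls the discrepancy by a constant times $\sqrt{\tfrac1N\log\bigl(\sum_j|a_j|/\sqrt{|a_0a_N|}\bigr)}$; if $\sum_j|a_j|$ is of size $2^N$ this gives only a discrepancy $O(1)$, not $o(1)$ --- equidistribution requires $\log\sum_j|a_j|=o(N)$, as you yourself state in your opening paragraph. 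Moreover the bound $2^N$ is actually attained on $\Omega_N$ (take $(x+1)^N$), so no pointwise improvement is possible: one must prove that the coefficient sum is subexponential (in fact polynomial in $N$) off a set of small relative measure, and that probabilistic estimate is the substantive content of the paper's proof. The paper does it via second moments of the coefficients: the inductive bound $|A_N(i,j)|\le N^3$ obtained from Fam's slicing $\Omega_N(a_N)=T_{N-1}\Omega_{N-1}$ (Lemmas~\ref{lemma:eq} and~\ref{lemma:over}), giving $E\bigl(\Omega_N,\sum_i|a_i|\bigr)\le (N+1)^2N^3$ (Lemma~\ref{lemma:soup}), whence by Markov $\sum_i|a_i|\le N^{100}$ outside a set whose measure is a large negative power of $N$ times $\Vol(\Omega_N)$ --- small enough to survive the factor $N$ lost in passing to $\Omega^P_N$. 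Without an estimate of this kind your steps (3) and (4) do not go through.

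Two smaller points. First, your plan for the constant term via the root-coordinate density is workable but heavier than needed: the paper already has the exact distribution of $|a_N|$ on $\Omega_N$, namely $H_N(x)\,dx$ proportional to $(1-x^2)^m\,dx$ on $[0,1]$, which gives tail bounds far stronger than the $o(1/N)$ you need for the Perron case; Markov applied only to $E(-\log|P(0)|)\sim\tfrac12\log N$, as you suggest, would give just $o(1)$ and would not suffice after the $1/N$ volume loss. Second, your step (4) again quietly assumes coefficients of size $e^{o(N)}$, so it too rests on the missing coefficient-sum estimate; once that estimate is in place, both the radial and angular statements follow in one stroke from the Hughes formulation, which is exactly how the paper concludes.
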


\begin{figure}[ht] \label{fig:metro} \centering \begin{minipage}[b]{0.45\linewidth}
\includegraphics[width=60mm]{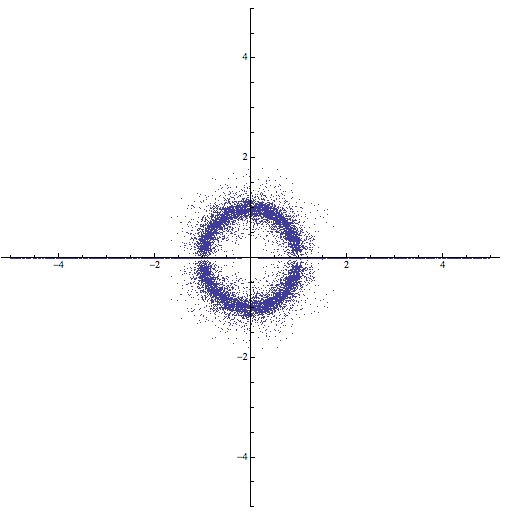}  
\end{minipage} \quad \begin{minipage}[b]{0.45\linewidth}
\includegraphics[width=60mm]{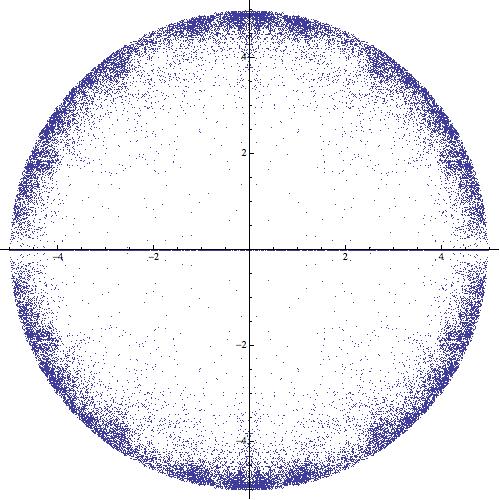} 
\end{minipage} 
\caption{The  first  graph consist of the roots of Perron polynomials with largest root $\le 5$ and
integral coefficients in~$[-5,5]$ as in~\cite{Thurston},  except no longer
normalizing by the largest (necessarily real) root~$\alpha$. The second graph consists of  roots of random Perron polynomials in~$\Omega^{P}_{21}$ scaled
to have a maximal root of absolute value $\le 5$ as generated by a random walk Metropolis--Hastings algorithm.
The second graph is in accordance with Theorem~\ref{theorem:cluster}.}
\end{figure}

\medskip

\subsection{Asymptotics} 
It is easy to give asymptotics for any product formula using Stirling's formula and its variants. For example, 
we have the following:

\begin{lemma}  \label{lemma:limit} 
 The probability
that a polynomial $P(x) \in \Omega_{N}$ has only real roots is
$$\frac{D^{+}_{N}}{D_{N}} \sim  
\frac{C  \cdot N^{1/8}}{2^{N^2/2}},$$
The probability that a polynomial $P(x) \in \Omega_{2N}$ has no real roots,
equivalently, that  $P(x) > 0$ for all $x$,  is
$$\frac{D^{-}_{2N}}{D_{2N}} \sim  \frac{2C}{\sqrt{2\pi} \cdot (2N)^{3/8}}$$
for the same constant~$C$ as above.
\end{lemma}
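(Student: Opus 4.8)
The plan is to reduce both claims, via elementary algebra together with Stirling's formula, to the asymptotics of the single product
$$P_n := \prod_{k=0}^{n-1} a_k, \qquad a_k := \frac{2^{2k+1}\,k!^2}{(2k+1)!}.$$
First I would record the bookkeeping that is immediate from Theorem~\ref{theorem:count}: with $b_k := 2^{k+1}k!^2/(2k+1)! = 2^{-k}a_k$ one has $D^{+}_N = \prod_{k=0}^{N-1} b_k = 2^{-N(N-1)/2}P_N$, while $D_{2m} = P_m^2$ and $D_{2m+1} = a_m P_m^2$, so that
$$\frac{D^{+}_{2m}}{D_{2m}} = \frac{2^{-m(2m-1)}\,P_{2m}}{P_m^{2}}, \qquad \frac{D^{+}_{2m+1}}{D_{2m+1}} = \frac{2^{-m(2m+1)}\,P_{2m+1}}{a_m\,P_m^{2}}.$$
Thus everything comes down to expanding $P_n$.

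To expand $P_n$ I would use the relation $a_k/a_{k-1} = 2k/(2k+1)$ (immediate from the definition), which gives $a_k = 2\prod_{j=1}^{k}\frac{2j}{2j+1}$ and, after collecting multiplicities, the closed form $P_n = 2^{n}\prod_{j=1}^{n-1}\big(\tfrac{2j}{2j+1}\big)^{n-j}$. Taking logarithms,
$$\log P_n = n\log 2 \;-\; n\sum_{j=1}^{n-1}\log\!\Big(1+\tfrac{1}{2j}\Big) \;+\; \sum_{j=1}^{n-1} j\log\!\Big(1+\tfrac{1}{2j}\Big).$$
The first inner sum telescopes to $\log\!\big(\binom{2n}{n}\,n\,2^{1-2n}\big)$, which by Stirling is $\tfrac12\log n - \tfrac12\log\pi + \log 2 + O(1/n)$; in the second, $j\log(1+\tfrac1{2j}) = \tfrac12 - \tfrac1{8j} + O(j^{-2})$, so by the asymptotics of the harmonic numbers that sum is $\tfrac n2 - \tfrac18\log n + \kappa + o(1)$ for an explicit convergent constant $\kappa$ (which can, if desired, be written in terms of the Glaisher--Kinkelin constant). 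Combining these gives
$$\log P_n = -\tfrac{n}{2}\log n + \tfrac{n}{2}\log(\pi e) - \tfrac18\log n + c + o(1),$$
equivalently $P_n = e^{c}\,(\pi e)^{n/2}\,n^{-n/2}\,n^{-1/8}(1+o(1))$ for a constant $c$; one also records $a_m\sim\sqrt{\pi/m}$ for use in the odd case.

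Substituting these expansions into the two ratios above, the factors $(\pi e)^{n/2}$ and $e^{c}$ cancel between $P_N$ and $P_{N/2}^{2}$ (respectively $a_m P_m^{2}$), and after collecting the surviving powers of $2$ and of $N$ one finds in both parities that $D^{+}_N/D_N = C\,N^{1/8}\,2^{-N^2/2}(1+o(1))$ with $C = 2^{-1/4}e^{-c}$; the two parities really do produce the same $C$, which is the one small point worth checking carefully. For the totally complex count I would then use the identity $D^{-}_{2N} = \binom{2N}{N}\,2^{2N(N-1)}\,D^{+}_{2N}$ from Theorem~\ref{theorem:count}: applying the first asymptotic with $N$ replaced by $2N$ and $\binom{2N}{N}\sim 4^{N}/\sqrt{\pi N}$, the powers of $2$ cancel exactly, $(2N)^{1/8}/\sqrt{\pi N}$ simplifies to $\sqrt2\,(2N)^{-3/8}/\sqrt{\pi}$, and one obtains $D^{-}_{2N}/D_{2N}\sim 2C/(\sqrt{2\pi}\,(2N)^{3/8})$ with the same $C$. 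The only genuinely delicate step is the expansion of $P_n$ to the precision stated above --- in particular extracting the $n^{-1/8}$ term and confirming the constant $C$ agrees in the even and odd cases; everything else is the routine ``Stirling's formula and its variants'' bookkeeping anticipated in the sentence preceding the lemma.
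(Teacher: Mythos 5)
Your argument is correct and is essentially the computation the paper has in mind (the paper omits the proof, saying only that it follows from Stirling's formula and its variants): the reductions $D^{+}_N = 2^{-N(N-1)/2}P_N$, $D_{2m}=P_m^{2}$, $D_{2m+1}=a_m P_m^{2}$, the expansion of $\log P_n$, the agreement of $C=2^{-1/4}e^{-c}$ in the two parities, and the passage to the totally complex case via $D^{-}_{2N}=2^{2N(N-1)}\binom{2N}{N}D^{+}_{2N}$ all check out. The one point worth making explicit is that to pin down the constant $c$ (so that $e^{c}$ genuinely cancels between $P_{N}$ and $P_{N/2}^{2}$, resp.\ $a_mP_m^2$) you need the two-term Stirling expansion of $\binom{2n}{n}$, since the $O(1/n)$ error in your first inner sum gets multiplied by $n$; with that refinement the expansion of $\log P_n$ and hence the lemma follow exactly as you describe.
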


\begin{remark} \emph{With a little extra care, one can also identify the constant~$C$ above as
$$C = 2^{-1/24} e^{- 3/2 \cdot \zeta'(-1)} =  1.24514 \ldots.$$
}
\end{remark}

\begin{remark} \label{remark:Poonen} \emph{ A polynomial $P(x) \in \Omega_{2N}$ is positive everywhere if and only if it is positive on~$[-1,1]$.
On the other hand, for many classes of random models of polynomials, it is a theorem  of 
Dembo, Poonen, Shao, and~Zeitouni~\cite{Poonen} that a random polynomial whose coefficients are chosen
with (say) identical normal distributions with zero mean is positive in $[-\infty,\infty]$
 with probability $N^{-b + o(1)}$ and positive in $[-1,1]$ with probability $N^{-b/2 + o(1)}$
 for some universal constant~$b/2$, which 
 they estimate be $0.38 \pm  0.015$. On the other hand, 
the exponent occuring above is $3/8 = 0.375$. Is there any direct relationship between
these theorems? For example, does this suggest that~$b = 3/4$? 
}
\end{remark}

\subsection{The limit  \texorpdfstring{$N \rightarrow \infty$}{N->oo}}

As $N \rightarrow \infty$, we are still able to say something about the geometric spaces $\Omega_N$,
but the direct connection with algebraic integers becomes more tenuous. Given a fixed region
$\Omega$ with appropriate properties, it is quite reasonable to be able to count lattice points
in the large~$X$ limit as $\Omega$ is scaled appropriately. However, the error in any such estimate
will depend highly on $\Omega$, so this does not allow one to understand the lattice points in a sequence
$\Omega_N$ of regions simply in terms of the volume.  There are some known global constraints.
For example,
Kronecker proved that the only elements of $\A_N$  with house in $[0,1]$ are roots of unity,
and the only elements of $\A^{+}_N$ with house in $[-2,2]$ are of the form $\zeta + \zeta^{-1}$ for a root of unity $\zeta$.
This is consistent with our volume computations; the
 smallest
value of $X$ for which $\Vol(\Omega_N) X^{N(N+1)/2}$ is $\ge 1$ is 
$$1 + \frac{\log N}{N} + O \left(\frac{1}{N}\right),$$ 
whereas the corresponding value for
 $\Vol(\Omega_{N,0}) X^{N(N+1)/2}$ is
 $$2 + \frac{2 \log N}{N} + O \left(\frac{1}{N}\right).$$
 (In practice, there exist algebraic integers which are not roots of unity of house at least as small
 as $ 2^{1/N} \sim 1 + \log 2/N$.)

\subsection{Configuration Spaces} \label{section:realselberg}
A natural way to understand the spaces $\Omega_{R,S}$ is to consider the spaces defined by the roots.
In this way, one can relate integrals over $\Omega_{R,S}$ to integrals over nicer spaces at the
expense of including the factor coming from the Jacobian.
For example, consider the case of $\Omega_{N,0}$. There is a natural map:
$$[-1,1]^N \rightarrow \R^N$$
given by:
$$\phi: (x_1,\ldots,x_N) \rightarrow (s_1,\ldots,s_N),$$
where $s_m$ is the $m$th symmetric polynomial in the $x_i$s.  Suppose that
$$s_{m,k}:= \frac{\partial}{\partial x_k} s_m.$$
Then $s_{m,k}$ is the $(m-1)$th symmetric polynomial in the variables $x_i$ with $x_k$ omitted, and the
Jacobian matrix is given by 
$J(\phi_*) = [s_{m,k}]$. If $x_i = x_j$ then $s_{m,i} = s_{m,j}$ and the Jacobian vanishes. By comparing
degrees, it follows that  $|J(\phi_*)|$  is the  absolute value of the Vandermonde determinant.
Since $\phi$ is generically $N!$ to $1$,
it follows that
$$\int_{\Omega_{N,0}} dV = \frac{1}{N!} \int_{[-1,1]^N} \prod |x_i - x_j| dx_1 \ldots dx_N = D^{+}_N.$$
Yet the latter integral can be computed exactly because it is a special case of the integrals considered
by~\cite{Selberg}.  This is enough to prove the corresponding
claim in Theorem~\ref{thm:one} in this case.
Similar parameterizations allow us to write $\int_{\Omega_{R,S}}$ as a multiple
integral, but not all the integrals which arise have such nice product expressions.

\subsection{Selberg Integrals} 
 We now consider all monic polynomials with real coefficients whose roots have absolute
value at most one. We assume that the polynomial has degree $N$, and that the polynomial
has $R$ real roots and $S$ pairs of complex roots. Let $B(1)$ be the unit ball in $\C$.
There is a map
$$B(1)^S \times [-1,1]^R \rightarrow \Omega_{R,S} \subset \R^{N}$$
Given by
$$\phi:(z_1,\ldots,z_S,x_1,\ldots,x_R) \rightarrow (z_1,\ldots,z_S,\overline{z}_1,\ldots,\overline{z}_S,x_1,
\ldots,x_R) \rightarrow (s_1, \ldots, s_{N}),$$
where the $s_i$ are symmetric in the $N$ variables. 
The following is elementary:

\begin{lemma}
 The absolute value of the determinant  $|J(\phi_*)|$ is  the absolute value of
$$ 2^{S} \prod_{i=1}^{S} (z_i - \overline{z_i}) \prod_{i \ne j} (z_i - z_j)(z_i - \overline{z_j})
\prod_{i \ne j} (x_i - x_j) \prod_{i,j} (x_i - z_j)(x_i - \overline{z}_j)$$
\end{lemma}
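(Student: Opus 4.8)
The plan is to recognise $\phi$ as the composition of an explicit $\C$-linear change of coordinates with the elementary symmetric function map, whose Jacobian has essentially already been recorded above in the discussion of $\Omega_{N,0}$.

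First I would fix the real coordinates $(a_1,b_1,\dots,a_S,b_S,x_1,\dots,x_R)$ on $B(1)^S\times[-1,1]^R$, where $z_j=a_j+ib_j$, and note that $\phi$ is a polynomial map $\R^N\to\R^N$: its components $s_1,\dots,s_N$ are the coefficients of the real polynomial $\prod_i(T-z_i)(T-\zbar_i)\prod_i(T-x_i)$, hence polynomials with real coefficients in the $a_j,b_j,x_i$. Consequently the real Jacobian determinant of $\phi$ at any real point equals the holomorphic Jacobian determinant of the complexified map $\phi_\C\colon\C^N\to\C^N$ obtained by allowing $a_j,b_j,x_i$ to be complex and setting $z_j=a_j+ib_j$, $\zbar_j=a_j-ib_j$ (now merely two independent linear forms). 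So it suffices to compute $|\det D\phi_\C|$ at the point in question.

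Next I would factor $\phi_\C=e\circ h$, where $h\colon\C^N\to\C^N$ sends $(a_j,b_j,x_i)$ to the $N$-tuple $(z_1,\dots,z_S,\zbar_1,\dots,\zbar_S,x_1,\dots,x_R)$ and $e(w_1,\dots,w_N)=(e_1(w),\dots,e_N(w))$ is the elementary symmetric function map. The map $h$ is $\C$-linear and block diagonal: an identity block on the $R$ real coordinates, and, for each pair $(a_j,b_j)$, a block $\bigl(\begin{smallmatrix}1&i\\1&-i\end{smallmatrix}\bigr)$ of determinant $-2i$; hence $|\det h|=2^S$. For the Jacobian of $e$ I would repeat the degree-and-vanishing argument already used above for $\Omega_{N,0}$: the $(m,k)$ entry $\partial e_m/\partial w_k$ is the $(m{-}1)$st elementary symmetric polynomial in the $w$'s with $w_k$ deleted, so $\det De(w)$ has total degree at most $\sum_{m=1}^N(m-1)=N(N-1)/2$, vanishes whenever two of the $w$'s coincide, and therefore equals $\pm\prod_{k<l}(w_k-w_l)$ by comparing leading coefficients.

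Multiplying, $|\det D\phi|=2^S\cdot\bigl|\prod_{k<l}(w_k-w_l)\bigr|$ with $w=(z_1,\dots,z_S,\zbar_1,\dots,\zbar_S,x_1,\dots,x_R)$, and the last step is pure bookkeeping: split the pairwise differences into those within a conjugate pair (giving $\prod_i(z_i-\zbar_i)$), those among the $z$'s and $\zbar$'s not forming a conjugate pair (recombining into $\prod_{i\ne j}(z_i-z_j)(z_i-\zbar_j)$), those between two real roots ($\prod_{i<j}(x_i-x_j)$), and the mixed ones ($\prod_{i,j}(x_i-z_j)(x_i-\zbar_j)$); since only the absolute value is asserted, the order of the $N$-tuple and hence the overall sign is irrelevant. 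I do not expect any obstacle — this is why the statement is called elementary — and the one point that genuinely needs care is the reduction of the real Jacobian to the complex Jacobian of $e$ (equivalently, the legitimacy of treating $z_j$ and $\zbar_j$ as independent variables), after which everything is forced and only the careful counting of Vandermonde factors remains.
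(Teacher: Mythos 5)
Your argument is correct and is essentially the proof the paper has in mind (the lemma is left as ``elementary'' there): the same degree-and-vanishing Vandermonde computation already used for $\Omega_{N,0}$, combined with the linear change of variables $(a_j,b_j)\mapsto(z_j,\zbar_j)$ whose determinant contributes the factor $2^S$, and the reduction of the real Jacobian to the complexified one is exactly the point that legitimizes treating $z_j,\zbar_j$ as independent coordinates. Note also that your bookkeeping gives $\prod_{i<j}(x_i-x_j)$ (each real pair once), which is in fact the correct count and agrees with the integral over $\Omega_{R,S}$ displayed immediately after the lemma; the exponent implicit in the lemma's $\prod_{i\ne j}(x_i-x_j)$ is a typo in the statement, not a gap in your proof.
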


The mapping from $B(1)^S \times [-1,1]^R$  to $\Omega_{R,S}$ is not $1$ to $1$.
 Rather, there is a generically faithful transitive  action of the group
 $\Z/2\Z \wr S_s \times S_r$ on the fibres.
Hence
$$\int_{\Omega_{R,S}} dV :=  
\frac{1}{R! S!}  \int_{[-1,1]^R} \int_{B(1)^S}
 \prod_{i=1}^{S} |z_i - \overline{z_i}| \prod_{i > j} |(z_i - z_j)(z_i - \overline{z_j})|^2
\prod_{i > j} |x_i - x_j| \prod_{i,j} |x - z_j|^2
  dV$$

\medskip

We shall discuss various integrals which are analogues of certain Selberg integrals.
These correspond to the contexts in which the roots are totally real, totally imaginary, or without any
restriction.

  \begin{df} Let $C_N(\alpha,T)$ denote the integral
  $$\int_{\Omega_N} |a_N|^{\alpha - 1} P(T) dV.$$
  \end{df}

  \begin{theorem} \label{theorem:main}
  There are equalities as follows. If $N = 2m+1$, then
  $$C_{N}(\alpha,T) = D_N \left(\prod_{k=0}^{m} \frac{1 + 2k}{\alpha + 2k}\right)
 \cdot \sum_{i=0}^{m}  \frac{2i+1}{2m+1}  \frac{m!^2}{(2m)!} \binom{2i}{i} \binom{2m-2i}{m-i} T^{2i+1},$$
 If $N = 2m$, then
  $$C_{N}(\alpha,T) = D_N  \cdot \left( \prod_{k=0}^{m-1} \frac{1 + 2k}{\alpha + 2k} \right) \cdot 
  \sum_{i=0}^{m} \frac{2i + \alpha}{2m + \alpha }  \frac{m!^2}{(2m)!} \binom{2i}{i} \binom{2m-2i}{m-i} T^{2i},$$
 \end{theorem}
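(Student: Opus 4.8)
The plan is to pass from the coefficient description of $\Omega_N$ to the root description, reduce the problem to Selberg-type integrals, and then reassemble the contributions of the different signatures. Two preliminary observations: $C_N(\alpha,T)$ is a polynomial in $T$ of degree $N$ (integrate $P(T) = \sum_j a_j T^j$ termwise), so it suffices to prove the stated identity for $T \ge 1$; and the involution $P(x) \mapsto (-1)^N P(-x)$ preserves $\Omega_N$ and the measure $|a_N|^{\alpha-1}\,dV$, so $C_N(\alpha,T) = (-1)^N C_N(\alpha,-T)$, which already explains why only powers of $T$ of the parity of $N$ occur. For $T \ge 1$ every factor $T - r_i$ of $P(T) = \prod_i(T-r_i)$ is nonnegative on $\Omega_N$, so absolute values may be dropped freely below.

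Next, apply the decomposition $\Omega_N = \coprod_{R+2S=N}\Omega_{R,S}$ and the Jacobian Lemma to rewrite $\int_{\Omega_{R,S}}|a_N|^{\alpha-1}P(T)\,dV$, in root coordinates, as an explicit integral over $[-1,1]^R \times B(1)^S$ against the Vandermonde-type weight of the Lemma, using $|a_N|^{\alpha-1} = \prod_i|x_i|^{\alpha-1}\prod_j|z_j|^{2(\alpha-1)}$ and $P(T) = \prod_i(T-x_i)\prod_j|T-z_j|^2$. The resulting family of integrals is a $\beta = 1$ (``real'') log-gas confined to the unit disc with single-particle weight $|z|^{2(\alpha-1)}$, and after dividing by $\int_{\Omega_N}|a_N|^{\alpha-1}\,dV$ the quantity $C_N(\alpha,T)$ is exactly the expectation of the monic characteristic polynomial $\prod_i(T-r_i)$ of this ensemble. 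I would evaluate the real-root directions using Selberg's integral with Vandermonde exponent $2z = 1$ together with the even weight $\prod_i|x_i|^{\alpha-1}$ (a Selberg integral of $BC_n$ type, whose evaluation contributes the rational factor $\prod_k\frac{1+2k}{\alpha+2k}$) and the Aomoto-type insertion $\prod_i(T-x_i)$ (whose evaluation is a generalized Gegenbauer/Jacobi polynomial in $T$, the source of the coefficients $\binom{2i}{i}\binom{2m-2i}{m-i}$ and of the $N$-even versus $N$-odd dichotomy). The complex-root directions carry the characteristic factor $\prod_j|z_j - \overline{z_j}|$ and are handled by the standard real-Ginibre technology, i.e.\ a Pfaffian / skew-orthogonal-polynomial evaluation with skew-orthogonal polynomials built from the generalized Gegenbauer polynomials for the weight $|x|^{\alpha-1}$ on $[-1,1]$; one then re-expands everything in powers of $T$ via classical hypergeometric identities.

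The hard part will be the final reassembly: each signature $(R,S)$ contributes a product of a real-Selberg factor and a complex-disc (Pfaffian) factor, and one must show that the sum over all $(R,S)$ with $R+2S=N$ collapses to the single closed form in the statement --- equivalently, that the $T$-polynomials appearing organize into the generating function $\bigl((1-4u)(1-4uT^2)\bigr)^{-1/2}$. A cleaner alternative that bypasses the signature decomposition is to parametrize $\Omega_N$ (up to a measure-zero set) by its Schur--Cohn reflection coefficients $\gamma_0,\dots,\gamma_{N-1} \in (-1,1)$: there $|a_N| = |\gamma_{N-1}|$, so $|a_N|^{\alpha-1}$ reweights only the last coordinate, and the Szeg\H{o} recursion $P(x) = x\,P_{N-1}(x) - \gamma_{N-1}P_{N-1}^{*}(x)$ lets one integrate out the coefficients $\gamma_{N-1},\gamma_{N-2},\dots$ one at a time; the $\alpha$-dependence then reduces to a single Beta integral over $\gamma_{N-1}$ (yielding $\prod_k\frac{1+2k}{\alpha+2k}$), while the polynomial in $T$ accumulates through the recursion, the binomial coefficients emerging by comparison with the Szeg\H{o} recursion for suitably normalized Gegenbauer polynomials; the obstacle there shifts to computing the (factorizable) Jacobian of the reflection-coordinate map and to the induction bookkeeping once no $\alpha$-weight remains. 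In either approach, specializing $T \to \infty$ and $\alpha = 1$ recovers the leading coefficient $D_N$, which fixes the normalizations and is consistent with Theorem~\ref{thm:one}; I would also cross-check the cases $N = 1,2,3$ directly against the coefficient description of $\Omega_N$.
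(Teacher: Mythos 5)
Your primary route (decompose by signature, pass to root coordinates, evaluate each $\Omega_{R,S}$ piece by a real Selberg/Aomoto integral times a real-Ginibre Pfaffian, then resum over $R+2S=N$) has a genuine gap at exactly the point you flag yourself: no mechanism is given for why the sum over signatures collapses to the stated closed form, and nothing in the proposal makes that plausible. The paper's own discussion indicates this is the true difficulty: signature-restricted integrals such as $C^{\pm}_N(\alpha,T)$ do \emph{not} have clean product forms (their explicit evaluations are described as complicated), and the $\Omega_3$ example with $|\Delta_P|$ shows that mixed-signature sums generally do not simplify. Moreover your bookkeeping of where the factor $\prod_k \frac{1+2k}{\alpha+2k}$ comes from cannot be right signature by signature: in root coordinates the weight is $\prod_i |x_i|^{\alpha-1}\prod_j|z_j|^{2(\alpha-1)}$, so each $(R,S)$ stratum carries its own $\alpha$-dependence with a number of factors depending on $(R,S)$, and the uniform product with $\lfloor (N+1)/2\rfloor$ factors can only emerge after the (unproved) resummation. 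So as written, the main plan is not a proof.

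Your ``cleaner alternative'' is, in substance, the paper's actual argument, and you should promote it from an aside to the proof. The paper fixes the last coefficient $a_N$ (which is precisely the last Schur--Cohn/reflection coefficient, $a_N=P(0)$) and uses Fam's observation that the slice $\Omega_N(a_N)$ is the linear image $T_{N-1}\Omega_{N-1}$ with $T_{N-1}=I_{N-1}+a_N\widehat{I}_{N-1}$, i.e.\ $P(T)=T\,Q(T)+a_N T^{N-1}Q(1/T)$ --- one step of the Szeg\H{o} recursion you wrote down. The Jacobian you worry about is just $|\det T_{N-1}|$, which factors explicitly (powers of $1\pm a_N$ according to parity), so the $a_N$-integral is a single Euler beta integral; relative to its value at $\alpha=1$ it produces the entire factor $\prod_{k}\frac{1+2k}{\alpha+2k}$, exactly as you predicted. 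The remaining ``induction bookkeeping'' is the recursion
$$C_N(\alpha,T)=T\,A_N\,C_{N-1}(1,T)+T^{N-1}B_N\,C_{N-1}(1,1/T),$$
with $A_N,B_N$ beta integrals in $a_N$; the mixing of $C_{N-1}(1,T)$ with $T^{N-1}C_{N-1}(1,1/T)$ (reversal of the $T$-polynomial) is what generates the binomial-convolution coefficients $\binom{2i}{i}\binom{2m-2i}{m-i}$, and induction on $N$ closes the argument. So: abandon the signature resummation, compute $\det T_{N-1}$ and the two beta integrals, and run this induction --- that is the paper's proof, and it is complete; your proposal as it stands stops short of these computations on both routes.
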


This theorem is proved in~\S\ref{section:evaluate}.
By considering the leading term of the polynomial, this integral
formula implies that
$$
\int_{\Omega_{N}} |a_N|^{\alpha - 1} dV
 = D_N \left(\prod_{k=0}^{m} \frac{1 + 2k}{\alpha + 2k}\right) =  D_N \frac{(1/2)_{m+1}}{(\alpha/2)_{m+1}},$$
where $N = 2m$ or $2m+1$. Specializing further to $\alpha = 1$,
we deduce that
$\displaystyle{ \int_{\Omega_{N}}  dV = D_N}$. This latter integral was also computed by Fam~\cite{Fam}, the evaluation of $C_N(\alpha,T)$ above is similar.

\subsection{Comparison with classical Selberg Integrals}

In~\S\ref{section:subspace}, we define integrals $C^{+}_N$ and $C^{-}_N$ which are similar
to $C_N(\alpha,T)$ except the integral takes place over $\Omega_{N,0}$ or $\Omega_{0,N}$
respectively.
The integral $C_N(\alpha,T)$ is in some sense both the most natural (in that they are integrals over the entire
space $\Omega_N$) and unnatural, in that they are most naturally written as a sum of multiple integrals,
not each of which obviously admits an exact formula. The real Selberg integrals are closest to the classical
Selberg integrals, but even in this case they are not obvious specializations of known integrals.
To explain this futher, recall that Selberg's integral is a generalization of the  $\beta$-integral, which we write as
$$ \int_{-1}^{1} (1+t)^{\alpha- 1} (1-t)^{\beta-1} dt = \frac{2^{\alpha+\beta-1} \Gamma(\alpha) \Gamma(\beta)}{ 
\Gamma(\alpha+\beta)}.$$
Selberg's integral $\displaystyle{\int_{[0,1]^N} \prod t_i^{\alpha-1} (1-t_i)^{\beta -1}
\prod |t_i -t_j|^{2 \gamma} dt_1\ldots dt_N}$  can be written (up to easy factors) as
$$\int_{\Omega_{N,0}}  P(1)^{\alpha-1} P(-1)^{\beta-1} |\Delta_P|^{2\gamma-1},$$
where we integrate over the
 configuration space  of  monic polynomials  $P$ of degree~$N$ with real roots in $[-1,1]$, and $\Delta_P$ is the discriminant
 of the corresponding  polynomial.
On the other hand, when one  writes down similar integrals over $\Omega_{N}$, 
the corresponding integrals do not have nice product expressions. To take a simple example, one finds that
$$
\begin{aligned}
\frac{1}{3!} \int_{[-1,1]^3} \prod (x_i - x_j)^2 dx_1 dx_2 dx_3 
 & \ + \int_{0}^{2 \pi} \int_{0}^{1} \int_{-1}^{1} 4 r^3 \sin^2(\theta)(x^2 - 2 x r \cos(\theta) + r^2)^2 dx dr d\theta \\
 = &  \ \int_{\Omega_{3}} |\Delta_P|  = 
  \frac{32}{135} + \frac{41 \pi}{15} \end{aligned}
  $$
  The range of suitable integrals which have nice expressions over $\Omega_N$ seems to be
  fairly limited. Curiously enough, however, the integrals we do define do not
  even specialize to the $\beta$-integral over $\Omega_{1,0} = [-1,1]$, but rather to
  $$\int_{-1}^{1} |t|^{\alpha - 1}dt = \frac{2}{\alpha},$$
  and mild variants thereof. Moreover, there does not seem to be any flexibility in varying
  the power of the discriminant, as evidenced by the example over $\Omega_3$ above.
  On the other hand, one does have the following integral over~$\Omega_N$, (Theorem~\ref{theorem:selb}),
  which is the direct generalization of Selberg's integral when $\gamma = 1/2$:
  $$S_{N}(\alpha,\beta) =   \int_{\Omega_N}   P(1)^{\alpha-1} |P(-1)|^{\beta-1}  = 
$$
$$\prod_{k=1}^{2m} \frac{2^{\alpha + \beta + k - 2}}{\Gamma(\alpha + \beta + k - 1)}
\prod_{k=1}^{m} \Gamma(\alpha +\beta + k-1) \Gamma(k)
\prod_{k=1}^{m}  \Gamma(\alpha +k+1) \Gamma(\beta + k-1), \ \text{$N = 2m$ even},$$
$$\prod_{k=1}^{2m+1} \frac{2^{\alpha + \beta + k - 2}}{\Gamma(\alpha + \beta + k - 1)}
\prod_{k=1}^{m} \Gamma(\alpha +\beta + k-1) \Gamma(k)
\prod_{k=1}^{m+1}  \Gamma(\alpha +k+1) \Gamma(\beta + k-1), \ \text{$N = 2m+1$ odd}.$$

\subsection{Moments}
\label{section:moments}
The integral $C_N(\alpha)=\int_{\Omega_{N}} |a_N|^{\alpha-1} dV$   allows a precise description of the moments of $|P(0)| = |a_N|$ on $\Omega_N$, namely
$$E(\Omega_N,|a_N|^{\alpha - 1})
= \frac{\displaystyle{\int_{\Omega_{N}} |a_N|^{\alpha - 1} dV}}
{\displaystyle{\int_{\Omega_{N}} dV}} =  \prod_{k=0}^{\lfloor (N-1)/2 \rfloor} \frac{1 + 2k}{\alpha + 2k}.$$
Denote this function by $M_N(\alpha)$. 
Suppose that $\mu_N = H_N(x)dx$ is the distribution on $[0,1]$ of $|a_{N}|$ over
$\Omega_{N}$. Then  we know
$$M_N(\alpha):=\int^{1}_{0} x^{\alpha - 1} H_N(x) dx = \int^{\infty}_{0} e^{-\alpha t} H_N(e^{-t}) dt$$
 It follows that $H_N(e^{-t})$ is the inverse Laplace transform of
$M(\alpha)$.  Write $m = \lfloor (N-1)/2 \rfloor$. 
\begin{lemma}
The measure $\mu_N = H_N(x)dx$ is given  on $[0,1]$ by
$$\frac{(1 - x^2)^m}{\displaystyle{\left(\int^{1}_{0} (1 - x^2)^m dx\right)}} dx 
= \frac{2}{\sqrt{\pi}} \cdot \frac{\Gamma(m+3/2)}{\Gamma(m+1)} (1-x^2)^m dx.$$
\end{lemma}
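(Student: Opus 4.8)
The plan is to \emph{guess} the answer — the normalized measure $\nu_m := c_m\,(1-x^2)^m\,dx$ on $[0,1]$ with $c_m^{-1} = \int_0^1 (1-x^2)^m\,dx$ — compute all of its moments, observe that they coincide with $M_N(\alpha)$ from Section~\ref{section:moments} for every positive integer $\alpha$, and then invoke uniqueness of the Hausdorff moment problem on the compact interval $[0,1]$ to conclude $\mu_N = \nu_m$.

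First I would compute $\int_0^1 x^{\alpha-1}(1-x^2)^m\,dx$ for $\alpha > 0$. The substitution $u = x^2$ turns this into $\tfrac12 \int_0^1 u^{\alpha/2 - 1}(1-u)^m\, du = \tfrac12\, \Gamma(\alpha/2)\,\Gamma(m+1)/\Gamma(\alpha/2 + m + 1)$, a Beta integral. Taking $\alpha = 1$ gives $\int_0^1 (1-x^2)^m\,dx = \tfrac12\, \Gamma(1/2)\,\Gamma(m+1)/\Gamma(m+3/2) = \tfrac{\sqrt\pi}{2}\cdot\Gamma(m+1)/\Gamma(m+3/2)$, which identifies $c_m = \tfrac{2}{\sqrt\pi}\cdot\Gamma(m+3/2)/\Gamma(m+1)$ and shows the two expressions in the statement of the lemma agree.

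Next I would form the ratio, so that the $\Gamma(m+1)$ factors cancel:
$$c_m \int_0^1 x^{\alpha-1}(1-x^2)^m\,dx = \frac{\Gamma(\alpha/2)\,\Gamma(m+3/2)}{\Gamma(1/2)\,\Gamma(\alpha/2 + m + 1)}.$$
Now rewrite each ratio of Gammas as a finite product via $\Gamma(s + m + 1)/\Gamma(s) = \prod_{k=0}^m (s+k)$: the first factor gives $\Gamma(\alpha/2)/\Gamma(\alpha/2+m+1) = \prod_{k=0}^m (\alpha/2 + k)^{-1} = 2^{m+1}\prod_{k=0}^m (\alpha + 2k)^{-1}$, and the second gives $\Gamma(m+3/2)/\Gamma(1/2) = \prod_{k=0}^m (1/2 + k) = 2^{-(m+1)}\prod_{k=0}^m (1 + 2k)$. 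Multiplying, the powers of $2$ cancel and one is left with $\prod_{k=0}^m \frac{1+2k}{\alpha+2k}$, which is exactly $M_N(\alpha)$ as recorded in Section~\ref{section:moments} (with $m = \lfloor (N-1)/2\rfloor$, so that the product has the correct number of terms in both parities of $N$).

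Finally, since $\mu_N$ is supported on the compact interval $[0,1]$, its moments $M_N(\alpha)$ at $\alpha = 1, 2, 3, \dots$ determine it uniquely (Weierstrass approximation, i.e.\ the Hausdorff moment problem); as $\nu_m$ has the same moments and the same support, $\mu_N = \nu_m$. Equivalently, one may read the displayed identity $M_N(\alpha) = \int_0^\infty e^{-\alpha t} H_N(e^{-t})\,dt$ as an inverse Laplace transform and verify directly that $H_N(e^{-t}) = c_m(1 - e^{-2t})^m$ solves it. I do not expect a genuine obstacle: the only point needing a word of justification is the uniqueness step, which is standard for compactly supported measures, and everything else is the Beta-integral bookkeeping above.
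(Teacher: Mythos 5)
Your proposal is correct, and it is essentially the argument the paper has in mind: the paper derives $M_N(\alpha)=\prod_{k=0}^{m}\frac{1+2k}{\alpha+2k}$ and characterizes $H_N$ by the (inverse Laplace transform / moment) identity $M_N(\alpha)=\int_0^1 x^{\alpha-1}H_N(x)\,dx$, which is exactly what you verify for the candidate density via the Beta integral, with uniqueness supplying the conclusion. Your explicit mention of the uniqueness step (Hausdorff/Weierstrass on $[0,1]$) is a small point the paper leaves implicit, but the substance of the computation is the same.
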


By taking the logarithmic derivative with respect to $\alpha$ and
then sending $\alpha$ to $1$, we also find that, with the same~$m$, 
$$E(\Omega_N,\log |a_N|) 
= - \left(1 + \frac{1}{3} + \frac{1}{5} + \ldots + \frac{1}{2m+1}\right)
= - \frac{1}{2} \left( \log 2N + \gamma \right) + O\left(\frac{1}{N}\right),$$
and hence, for any root $\beta$,
$$E(\Omega_N,\log |\beta|) =- \frac{\log N}{2N} + O\left(\frac{1}{N}\right).$$
Since $|x| - 1 \ge \log |x|$ for $x \in B(1)$, we obtain the estimate:
$$E(\Omega_N,|\beta|) \ge 1 - \frac{\log N}{2N}.$$
It follows that the expected number of roots with absolute value less than $1 - \eps/2$ is less
than $\log|N| \eps$; in particular, it follows that almost all roots have absolute value
 within $1/N^{\delta}$
of~$1$ for any fixed $\delta < 1$ as $N \rightarrow \infty$.

\subsection{Perron Algebraic Integers}
\label{section:perron}
If $\alpha$ is a Perron algebraic integer, the necessarily $\alpha$ is real, since otherwise 
$|\alpha| = |\sigma \alpha|$. The converse is not true, however (take $\alpha = 2^{1/4}$). The
natural model for Perron numbers is polynomials whose largest root is real. (The subspace
of polynomials with more than one non-conjugate largest root has zero measure.)
Let $\Omega^{P}_{N}$ and $\Omega^P_{R,S}$ denote the corresponding spaces. 
Then there is a natural map:
$$\Omega_{N-1} \times [-1,1] \rightarrow \Omega^{P}_{N}$$
given by sending $P(x)$ to $t^{N-1} P(x/t)(x-t)$.
The effect on the variables is
$$b_k = t^{k}(a_{k} - a_{k-1}).$$
The Jacobian of this matrix is $(-1)^N \cdot t^{(N-1)(N+2)/2} P(1)$, and so, if $N=R+2S$,
$$\int_{\Omega^{P}_{R,S}} dV =    \int_{-1}^{1} |t|^{(N-1)(N+2)/2}   \int_{\Omega_{R-1,S}} |P(1)| dV
= \frac{4}{N(N+1)} \int_{\Omega_{R-1,S}} P(1) dV.$$
(Note, by assumption, that $P(1) \ge 0$, because all the (real) roots have absolute value $\le 1$, and the leading
coefficient is positive.) When $S = 0$, we see that
$$
\begin{aligned}
\int_{\Omega^{P}_{N,0}} dV = & \  
\frac{4}{N(N+1)} \int_{\Omega_{N-1,0}} P(1) dV \\
 =  & \  \frac{1}{N(N+1)}\frac{1}{(N-1)!} \int_{[-1,1]^N} \prod (1 - x_i) \prod |x_i - x_j| dx_1 \ldots dx_{N-1}. \end{aligned}$$
 The latter is another Selberg integral; in fact, by a special case of result of Aomoto~\cite{Aomoto},
 $$\frac{4}{N+1}  \int_{[-1,1]^{N-1}} \prod (1 - x_i) \prod |x_i - x_j| dx_1 \ldots dx_{N-1}
 = \int_{[-1,1]^{N}} \prod |x_i - x_j| dx_1 \ldots dx_{N},$$
 and hence
 $$\int_{\Omega^{P}_{N,0}} dV  = \frac{1}{N!}  \int_{[-1,1]^{N}} \prod |x_i - x_j| dx_1 \ldots dx_{N}
 = \int_{\Omega_{N,0}} dV.$$
 Of course, this is as expected, because a Perron polynomial with real roots is simply a polynomial
 with real roots.
 Let $D^{P}_{R,S} = \int_{\Omega^{P}_{R,S}} dV$.
Using the evaluation of our Selberg integral in Theorem~\ref{theorem:main}, we can compute the volume of $\Omega^P_{N}$.
Namely, we have:
\begin{lemma} \label{lemma:perronintegral}
There is an equality
$$\Vol(\Omega^P_{N}) = \frac{4}{N(N+1)} C_{N-1}(1,1) = \begin{cases}
\displaystyle{{D_N}/{N}}, 
& \text{$N$ odd}, \\
\displaystyle{{D_N}/{(N+1)}},
 & \text{$N$ even}. \end{cases}
$$ 
\end{lemma}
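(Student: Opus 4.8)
The plan is to read off the result from the parametrization already constructed above together with the evaluation of the Selberg-type integral in Theorem~\ref{theorem:main}. First I would sum the displayed identity $\int_{\Omega^{P}_{R,S}} dV = \frac{4}{N(N+1)}\int_{\Omega_{R-1,S}} P(1)\,dV$ over all signatures $(R,S)$ with $R+2S=N$. Here the only ones that contribute have $R\ge 1$, which is automatic for $\Omega^{P}_{R,S}$ since a polynomial with a real largest root must have at least one real root; as $(R,S)$ ranges over these pairs, $(R-1,S)$ ranges over all signatures summing to $N-1$, so the right-hand side collects to $\frac{4}{N(N+1)}\int_{\Omega_{N-1}} P(1)\,dV$ by the decomposition $\Omega_{N-1}=\coprod_{R'+2S'=N-1}\Omega_{R',S'}$. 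Since $C_{N-1}(\alpha,T)=\int_{\Omega_{N-1}}|a_{N-1}|^{\alpha-1}P(T)\,dV$ specializes at $\alpha=1$, $T=1$ to $\int_{\Omega_{N-1}}P(1)\,dV$, this already gives the first asserted equality $\Vol(\Omega^{P}_{N}) = \frac{4}{N(N+1)} C_{N-1}(1,1)$.

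Next I would compute $C_{N-1}(1,1)$ by specializing Theorem~\ref{theorem:main}. Putting $\alpha=1$ turns every factor $\frac{1+2k}{\alpha+2k}$ into $1$ and every factor $\frac{2i+\alpha}{2m+\alpha}$ into $\frac{2i+1}{2m+1}$, while putting $T=1$ removes all powers of $T$; thus in both parities, writing $m=\lfloor (N-1)/2\rfloor$,
\[
C_{N-1}(1,1) = D_{N-1}\cdot\frac{m!^2}{(2m)!}\cdot\frac{1}{2m+1}\sum_{i=0}^{m}(2i+1)\binom{2i}{i}\binom{2m-2i}{m-i}.
\]
The one genuinely combinatorial point is the identity $\sum_{i=0}^m (2i+1)\binom{2i}{i}\binom{2m-2i}{m-i} = (m+1)4^m$. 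I would prove this by adding the sum to its image under the reflection $i\mapsto m-i$: the two sums are equal, and their sum is $(2m+2)\sum_{i=0}^m\binom{2i}{i}\binom{2m-2i}{m-i}=(2m+2)4^m$ by the classical central-binomial convolution $\sum_i\binom{2i}{i}\binom{2m-2i}{m-i}=4^m$. Hence $C_{N-1}(1,1) = D_{N-1}\cdot\frac{m!^2}{(2m)!}\cdot\frac{(m+1)4^m}{2m+1}$.

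Finally I would assemble the pieces, using the one-step ratios $D_{2m+1}=\frac{m!^2\,2^{2m+1}}{(2m+1)!}\,D_{2m}$ and $D_{2m+2}=\frac{m!^2\,2^{2m+1}}{(2m+1)!}\,D_{2m+1}$, which are immediate from the product formulas for $D_N$ in Theorem~\ref{theorem:count}. Substituting $C_{N-1}(1,1)$ into $\Vol(\Omega^{P}_N)=\frac{4}{N(N+1)}C_{N-1}(1,1)$ and simplifying via $4(m+1)=2(2m+2)$, $(2m+1)(2m)!=(2m+1)!$, and $2\cdot 4^m=2^{2m+1}$: when $N=2m+1$ the constant $\frac{4}{N(N+1)}\cdot\frac{m!^2}{(2m)!}\cdot\frac{(m+1)4^m}{2m+1}$ collapses to $\frac{1}{2m+1}\cdot\frac{m!^2\,2^{2m+1}}{(2m+1)!}$, whence $\Vol(\Omega^{P}_N)=D_{2m+1}/(2m+1)=D_N/N$; when $N=2m+2$ it collapses to $\frac{1}{2m+3}\cdot\frac{m!^2\,2^{2m+1}}{(2m+1)!}$, whence $\Vol(\Omega^{P}_N)=D_{2m+2}/(2m+3)=D_N/(N+1)$. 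Granting Theorem~\ref{theorem:main}, I do not anticipate a real obstacle: the argument is a specialization plus the elementary binomial identity and factorial bookkeeping. The only places needing a little care are matching the parity of $N-1$ to the correct branch of Theorem~\ref{theorem:main} and tracking the ratios among $D_{2m}$, $D_{2m+1}$, $D_{2m+2}$ consistently.
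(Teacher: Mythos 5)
Your proof is correct and follows essentially the same route as the paper: the first equality is exactly the parametrization $\Omega_{N-1}\times[-1,1]\to\Omega^{P}_{N}$ already set up in \S\ref{section:perron}, and the second is the ``elementary manipulation with products of factorials'' the paper alludes to. You simply carry out that manipulation explicitly---specializing Theorem~\ref{theorem:main} at $\alpha=1$, $T=1$, using $\sum_{i=0}^{m}(2i+1)\binom{2i}{i}\binom{2m-2i}{m-i}=(m+1)4^{m}$, and the one-step ratios of the $D_N$---and all of these steps check out.
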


\begin{proof} The first equality follows from the computation above, the second
is an elementary manipulation with products of factorials. \end{proof}

We can do a similar analysis of expectation of $|a_{N}|^{\alpha -1}$ on $\Omega^{P}_N$
as we did with $\Omega_N$.
Namely
$$
\begin{aligned}
\int_{\Omega^{P}_{N}} |a_{N}|^{\alpha - 1}
dV =  & \ 
\int_{-1}^1 \int_{\Omega_{N-1}} |t|^{(N-1)(N+2)/2} |t|^{(\alpha-1)N} |a_{N-1}|^{\alpha - 1}  P(1) dV dt \\
=  & \   \int_{-1}^{1}|t|^{(N-1)(N+2)/2 + (\alpha - 1)N} dt   \int_{\Omega_{N-1}} |a_{N-1}|^{\alpha -1}  P(1) dV \\
= \frac{4 C_{N-1}(\alpha,1)}{N(N + 2 \alpha - 1)} 
\end{aligned}
$$
and hence
$$\displaystyle{E(\Omega^{P}_{N},|a_{N}|^{\alpha -1})
=  \begin{cases}
\displaystyle{\prod_{k=0}^m \frac{1+2k}{\alpha+2k}}, 
& \text{$N=2m+1$ odd}, \\
\displaystyle{ \frac{N+1}{N + 2 \alpha - 1} \prod_{k=0}^{m-1} \frac{1 + 2k}{\alpha + 2k}},
 & \text{$N=2m$ even}. \end{cases}}.$$
For example,
$E(\Omega^P_{21},|a_{21}|) = 
 \displaystyle{\frac{88179}{524288}}$, as mentioned earlier.

\begin{remark} \emph{
One could equally do this calculation insisting that Perron numbers be positive,
one simply replaces the integral over~$[-1,1]$ by an integral over~$[0,1]$. This makes
no difference to the proof of Theorem~\ref{theorem:cluster}
}
\end{remark}

\subsection{Selberg Integrals on subspaces} \label{section:subspace}

Let us consider the complex Selberg integral
$$C^{-}_N(\alpha) = \int_{\Omega_{0,N}} |a_{2N}|^{\alpha - 1}.$$
Recall that
$\displaystyle{(a)_k = a(a+1) \ldots (a+k-1) = \frac{\Gamma(a+k)}{\Gamma(a)}}$.
We have:

\begin{theorem} \label{theorem:complex}
Let $K_N = \displaystyle{\prod_{k=1}^{N-1}  \frac{k!^3 (k+1)! 2^{4k+1}}{(2k+1)!^2}}$. Then
$$C^{-}_N(\alpha)  = \int_{\Omega_{0,N}} |a_{2N}|^{\alpha - 1} = 
K_N \cdot \binom{2N}{N} \cdot
\frac{(4N+2 \alpha) (N + \alpha + 1/2)_{N}}{(N + \alpha)_{1+N}}  \prod_{k=1}^{N} \frac{(\alpha+k)_{2N+1-2k}}{(\alpha-1/2+k)_{2N+2-2k}},
$$
and
$C^{-}_N(1) = \displaystyle{\int_{\Omega_{0,N}} dV = D^{-}_{2N}}$.
\end{theorem}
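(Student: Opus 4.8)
The plan is to realize $C^{-}_N(\alpha)$ as a genuine Selberg-type integral over the configuration space of $N$ points in the unit disc and to evaluate it by the complex Selberg integral (Aomoto–Kadell / Dotsenko–Fateev type formula). By the Jacobian computation in §\ref{section:realselberg} applied with $R=0$, $S=N$, we have
$$C^{-}_N(\alpha) = \frac{1}{N!} \int_{B(1)^N} \prod_{i=1}^N |z_i - \overline{z}_i| \prod_{i>j} |(z_i-z_j)(z_i - \overline{z}_j)|^2 \cdot \prod_{i=1}^N |z_i \overline{z}_i|^{\alpha-1}\, dV,$$
since $|a_{2N}| = \prod |z_i|^2 = \prod |z_i \overline{z}_i|$. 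First I would pass to polar-type coordinates $z_i = r_i e^{i\theta_i}$ with $\theta_i \in (0,\pi)$ (using the $\mathbf{Z}/2 \wr S_N$ symmetry to restrict to the upper half disc), so $|z_i - \overline{z}_i| = 2 r_i \sin\theta_i$ and the integrand becomes an explicit function of the $r_i, \theta_i$. The key structural observation is that this is precisely the integral that arises when one computes the second moment (or an $\alpha$-twisted moment) of a characteristic polynomial against a $\beta=2$ ensemble on the disc, and it should factor after a change of variables of the form $u_i = r_i^2$ or $t_i = (1 - r_i^2)$ into a product of one-dimensional Beta integrals times a Selberg integral in the angular variables.

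The cleaner route, which I expect to be the one the authors take, is to avoid the $2N$-dimensional integral entirely and instead relate $C^{-}_N(\alpha)$ to $C^{-}_{N-1}$ by the same peeling-off-a-root device used for $\Omega^P_N$: there is a map $\Omega_{0,N-1} \times B(1) \to \Omega_{0,N}$, $(P(x), z) \mapsto |t|^{2(N-1)} P(x/t)\, (x^2 - 2\,\mathrm{Re}(z)\, x + |z|^2)$ after suitable scaling, whose Jacobian is (up to sign and a power of $|z|$ or $|t|$) the value $P$ takes at the new pair of complex roots, namely $|P(z)|^2$. Integrating this out, $C^{-}_N(\alpha)$ becomes $C^{-}_{N-1}$ of a modified weight involving $\int_{B(1)} |z|^{2\alpha-2}|P(z)|^2\, dV$, which by expanding $|P(z)|^2 = \sum a_i \overline{a_j} z^i \overline{z}^j$ and integrating over the disc in polar coordinates collapses to a diagonal sum $\sum_i |a_i|^2 \cdot (\text{explicit rational in } \alpha, i)$ — a manageable linear functional on $\Omega_{0,N-1}$. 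One then needs to know the full suite of moments $\int_{\Omega_{0,N-1}} |a_i|^2 |a_{2(N-1)}|^{\alpha-1}\, dV$, or better, set up the recursion in a two-variable generating form (tracking $|a_{2N}|^{\alpha-1}$ together with the top-degree behavior) exactly as Theorem~\ref{theorem:main} does for $\Omega_N$. Matching the resulting recursion against the closed form $K_N \binom{2N}{N} \frac{(4N+2\alpha)(N+\alpha+1/2)_N}{(N+\alpha)_{1+N}} \prod_{k=1}^N \frac{(\alpha+k)_{2N+1-2k}}{(\alpha-1/2+k)_{2N+2-2k}}$ is then a finite induction: check $N=1$ by hand and verify the ratio of consecutive terms agrees with the ratio predicted by the integrated-out weight.

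The main obstacle is the bookkeeping in the recursion: peeling a quadratic factor off a real polynomial mixes the coefficients in a way that does not preserve the single statistic $|a_{2N}|$, so one genuinely needs either (a) the full polynomial-valued refinement (an analogue of the $\sum T^{2i}$ expansion in Theorem~\ref{theorem:main}, here a symmetric-function-valued or multi-moment refinement over $\Omega_{0,N}$), or (b) an independent direct evaluation of the $N$-fold disc integral via a known complex Selberg integral, after which the recursion is only used as a consistency check. I would pursue (a): define $C^{-}_N(\alpha, \mathbf{T})$ tracking enough linear data to close the recursion, prove the recursion by the Jacobian/disc-integration step above, solve it, and then specialize to recover the stated formula; finally set $\alpha = 1$ and simplify the Pochhammer products — using $(1/2)_k$ duplication identities — to identify $C^{-}_N(1)$ with $D^{-}_{2N} = \frac{2^{2N(N-1)}(2N)!}{N!^2} D^+_{2N}$ as given in Theorem~\ref{thm:one}.
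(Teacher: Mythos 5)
Your starting point is correct (the configuration-space integral over $B(1)^N$ with the Jacobian from the conjugate-pair Vandermonde, and $|a_{2N}|=\prod |z_i|^2$), but neither of your two proposed routes is actually carried out, and each has a concrete obstruction. For route (b): the integrand here is not of Aomoto--Kadell/Dotsenko--Fateev type, because besides $\prod|z_i-z_j|^2$ it contains the factors $\prod_{i\ne j}|z_i-\overline{z}_j|^2$ and $\prod_i |z_i-\overline{z}_i|$ forced by the roots coming in conjugate pairs of a \emph{real} polynomial; no standard complex Selberg formula applies to this weight, so "evaluate by the known complex Selberg integral" is not a step you can take off the shelf. For route (a): after peeling off a quadratic factor the inner integral is $\int_{\Omega_{0,N-1}}|Q(z)|^2\,|b_{2N-2}|^{\alpha-1}\,dV$ with $|Q(z)|^2=\sum_{i,j}b_i b_j z^{2N-2-i}\overline{z}^{\,2N-2-j}$, so closing the recursion requires the full matrix of weighted second moments $\int_{\Omega_{0,N-1}} b_i b_j |b_{2N-2}|^{\alpha-1}dV$; a recursion for these in turn involves fourth moments, and you give no mechanism (no "polynomial-valued refinement" is actually defined) by which this closes. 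You acknowledge this gap yourself, so as written the proposal is a plan rather than a proof, and checking $N=1$ plus "matching ratios" cannot substitute for having a closed recursion.

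The idea that is missing is the one the paper actually uses: write $|z-\overline{z}|=\sqrt{-1}\,\sign(\im(z))(\overline{z}-z)$ and expand the whole signature-$(0,N)$ Vandermonde as a signed sum over $S_{2N}$ of monomials $\prod_j z_j^{\sigma(2j-1)-1}\overline{z}_j^{\sigma(2j)-1}\sign(\im(z_j))$; integrate each monomial over the disc against $|z|^{2(\alpha-1)}$ by the elementary computation of Lemma~\ref{lemma:smallint} (the $\sign(\im z)$ factor kills all terms except those with $\sigma(2j-1)-\sigma(2j)$ odd, leaving $\dfrac{4}{2\alpha+p+q-2}\cdot\dfrac{1}{p-q}$); then use the parity symmetry to collapse the sum over $S_{2N}$ to the Leibniz expansion of the $N\times N$ matrix
$$M_N=\left[\frac{8}{(2\alpha+2i+2j-3)(2i-2j+1)}\right],$$
which is a Cauchy matrix $\bigl[1/(X_i-Y_j)\bigr]$ for explicit quadratics $X_i,Y_j$, and is therefore evaluated in closed product form by Cauchy's determinant formula. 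That determinantal reduction is what produces the Pochhammer product in the statement; without it (or a genuinely closed recursion, which you have not supplied), the evaluation does not go through.
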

We prove this theorem in~\S\ref{section:complex}.
It is possible to give explicit expressions for the integral
$C^{-}_N(\alpha,T) = \int_{\Omega_{0,N}} |a_{2N}|^{\alpha - 1} P(T)$ and
$C^{+}_N(\alpha,T) = \int_{\Omega_{N,0}} |a_N|^{\alpha - 1} P(T)$ as explicit polynomials
whose coefficients are various products of Pochhammer symbols. However, since we have
no use for them and the details are somewhat complicated, we omit them here.

 \subsection{The expected number of real roots}
 The expected number $r_N$ of real roots of a random polynomial in $\Omega_N$ is given, for small~$N$, as follows:
 
 $$0,1,\frac{2}{3}, \frac{17}{15}, \frac{32}{35}, \frac{43}{35}, \frac{1226}{1155}, \frac{1303}{1001},\
 \frac{10496}{9009},
 \frac{208433}{153153}, \frac{402}{323}, \frac{1367}{969}, \ldots $$

There is a  map:
$$R: \Omega_{N-1} \times [-1,1] \rightarrow \Omega_{N}$$
given by sending $P(x)$ to $P(x)(x-T)$. 
The Jacobian of this map is equal to $|P(T)|$. 

This map is not $1$ to $1$, rather, the image
in $\Omega_{R,S}$ has multiplicity~$R$. We exploit this fact as follows. Let $Z(P)$ denote
the number of real roots of~$P$. Then pulling back the measure under $R^*$, we deduce that
$$\int_{\Omega_{N}} Z(P) = \int_{-1}^{1} \int_{\Omega_{N-1}}  |P(T)|dV.$$
For $|T| < 1$, the inner integral strictly differs from $C_{N-1}(1,T)$, so we cannot
use our evaluation of $C_N(1,T)$ to compute this integral. On the other hand, 
if we compute a \emph{signed} integral, then we are computing  the probability that the number  of  real
roots
is odd. We thus deduce, as expected, that
$$ \frac{1}{D_{N+1}} \int_{-1}^{1} C_N(1,T) = \begin{cases} 1, & N \ \text{even}, \\
0, & N \ \text{odd}.\end{cases}$$

There is, however, the following conjectural formula for the integral of $|P(T)|$:

\begin{conj} \label{conj:absolute} If $N = 2m$, then, for $T \in [-1,1]$,
{\footnotesize
$$\frac{1}{D_N} \int_{\Omega_N} |P(T)| = \frac{1}{2^{2m} \binom{2m}{m}} \left( \sum_{k=0}^{m} \frac{2m-2k+1}{2m+1} 
\binom{2m-2k}{m-k} \binom{2k}{k} T^{2k} \right)
 \left( \sum_{k=0}^{m} 
\binom{2m-2k}{m-k} \binom{2k}{k} T^{2k} \right).$$
}
If $N = 2m+1$, then, for $T \in [-1,1]$,
{\footnotesize
$$\frac{1}{D_N} \int_{\Omega_N} |P(T)| =  \frac{1}{2^{2m+2} \binom{2m}{m}}  \left( \sum_{k=0}^{m} \frac{2m-2k+1}{2m+1} 
\binom{2m-2k}{m-k} \binom{2k}{k} T^{2k} \right)
 \left( \sum_{k=0}^{m+1} 
\binom{2m+2-2k}{m+1-k} \binom{2k}{k} T^{2k} \right).$$
}
\end{conj}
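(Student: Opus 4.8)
The plan is to reduce the evaluation of $\int_{\Omega_N} |P(T)|$ to the already-proven Theorem~\ref{theorem:main} by a decomposition-of-sign argument exactly parallel to the one used in \S\ref{section:realselberg} for the expected number of real roots. Starting from the map $R:\Omega_{N-1}\times[-1,1]\to\Omega_N$, $P(x)\mapsto P(x)(x-T)$ with Jacobian $|P(T)|$, the key observation is that for a polynomial $Q\in\Omega_N$ the value $Q(T)$ with $T\in[-1,1]$ has a sign governed entirely by how many real roots of $Q$ lie to the right of $T$: writing $Q(x)=\prod(x-x_i)\prod(x-z_j)(x-\overline z_j)$ and noting each complex factor $(T-z_j)(T-\overline z_j)=|T-z_j|^2>0$, we get $\sign Q(T)=(-1)^{\#\{i:\,x_i>T\}}$. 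So $|P(T)|$ on $\Omega_N$ is a finite combination of the signed integrands $\prod_{i\in I}\sign(T-x_i)\cdot P(T)$ over subsets $I$ of the real roots, and each of these, upon pulling back along suitable iterated versions of the maps $R$ and the real-root splitting map $\Omega_{R,S}\to B(1)^S\times[-1,1]^R$, becomes an integral we can match against $C_\bullet(1,T)$ and the real Selberg integrals $C^+_\bullet$ from \S\ref{section:subspace}. Concretely I would first handle $T\in[-1,1]$ fixed, split $\Omega_N=\coprod_{R+2S=N}\Omega_{R,S}$, and on $\Omega_{R,S}$ integrate out the complex pairs (which contribute the positive factor $\prod|T-z_j|^2$ and a Selberg-type constant) to reduce to integrals over $[-1,1]^R$ of $\prod|x_i-x_j|\cdot\prod_i|T-x_i|$; the absolute value $\prod_i|T-x_i|=\prod_i|x_i-T|$ over the cube is a genuine Selberg/Aomoto-type integral with an extra linear node at $T$.

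The cleanest route to the last integral is to split $[-1,1]=[-1,T]\cup[T,1]$ for each variable: $\int_{[-1,1]^R}\prod_{i<j}|x_i-x_j|\prod_i|x_i-T|\,dx$ decomposes according to how many of the $R$ variables fall in $[T,1]$, and on each piece $|x_i-T|$ has a fixed sign, so each piece is (after an affine change of variables) a product of two classical Selberg integrals — one over $[-1,T]^{a}$ and one over $[T,1]^{R-a}$, glued by a Vandermonde cross term $\prod|x_i-x_j|$ between the two groups. The cross term is exactly the obstruction to this being a single Selberg integral, but it is handled by the standard Selberg-integral identity for a partitioned Vandermonde (equivalently by expanding the cross Vandermonde via the Cauchy–Binet / Jacobi–Trudi route, or by recognizing the sum over $a$ as a Vandermonde–convolution). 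I expect the resulting sum over $a$ to collapse, via a Chu–Vandermonde identity on the $\binom{2k}{k}$-type coefficients appearing in Theorem~\ref{theorem:main}, to precisely the displayed product of two polynomials in $T$. In fact, a slicker version avoids the cube entirely: apply the splitting map directly at the polynomial level, writing a generic $Q\in\Omega_N$ as $Q(x)=Q_{<}(x)Q_{>}(x)$ where $Q_{<}$ collects roots (real or complex) "inside" one of the two sub-configurations determined by $T$, use the $\Omega_N=$ convolution of $\Omega$-spaces induced by $x\mapsto (x-1)/?$-type affine rescalings of $[-1,T]$ and $[T,1]$ back to $[-1,1]$, and read off that $\int_{\Omega_N}|P(T)|$ is a convolution (in the sense of the $C(\alpha,T)$-polynomials) of the two "half-spectrum" generating functions.

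Before attempting the full identity I would verify it directly in low degree: for $N=1$, $\frac1{D_1}\int_{-1}^1|t-T|\,dt=\frac12\big((1-T)^2+(1+T)^2\big)/2=\frac{1+T^2}{2}$, and the conjectured right-hand side for $N=2m+1$ with $m=0$ is $\frac1{4}\cdot 1\cdot(1+T^2)$ — a mismatch by a factor, so the normalization constants $2^{2m}\binom{2m}{m}$ versus $2^{2m+2}\binom{2m}{m}$ and the $D_N$'s must be tracked with care; getting these exactly right is the first thing to nail down. Then check $N=2$ and $N=3$ against the explicit $\int_{\Omega_3}|\Delta_P|$-style computations already in the paper (here one wants $T=0$, where the formula should reduce, via the already-known $\int_{\Omega_N}|a_N|^{\alpha-1}$ evaluation at $\alpha$ shifted by the extra factor of a root at $0$, to a ratio of the constants $D_\bullet$). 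The genuine hard part is the combinatorial collapse of the double sum over $(a, k)$: one needs the Selberg/Aomoto evaluation on $[-1,T]^a\times[T,1]^{R-a}$ with the joining Vandermonde \emph{and} the sum over the signature decomposition $R+2S=N$ to conspire into a clean product. I would organize this by first proving the analogue of Theorem~\ref{theorem:main} for the "half" integral $\int_{\Omega_N} P(T)\cdot[\text{all real roots}\le T]$ (a one-sided truncation, which is a bona fide Aomoto integral and should have a single-sum closed form), and then observing that $|P(T)|$ is, on each $\Omega_{R,S}$, the value $(-1)^{\#\{x_i>T\}}P(T)$, so that the generating identity
\[
\int_{\Omega_N}|P(T)|\;=\;\sum_{k}\Big(\int\text{[$k$ roots $>T$]}\cdot(-1)^k P(T)\Big)
\]
factors as a product of the two one-sided generating polynomials — "roots to the left of $T$" times "roots to the right of $T$" — which is manifestly the shape of the claimed answer. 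If that factorization through one-sided Aomoto integrals goes through, the two displayed formulas (even and odd $N$) follow by matching the two factors against the one-sided evaluations, with the $\binom{2m-2k}{m-k}\binom{2k}{k}$ coefficients coming out of the Selberg constants exactly as they already do in Theorem~\ref{theorem:main}.
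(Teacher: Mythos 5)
The first thing to say is that the paper does not prove this statement: it is stated as Conjecture~\ref{conj:absolute}, and every subsequent result that uses it (Lemma~\ref{lemma:zeil} and the two theorems on the expected number of real roots) is explicitly conditional on it. So your proposal cannot be compared to a proof in the paper; it has to stand on its own, and as written it is a plan rather than a proof, with the decisive steps left as hopes. The central gap is the claim that on $\Omega_{R,S}$ you can ``integrate out the complex pairs (which contribute the positive factor $\prod_j|T-z_j|^2$ and a Selberg-type constant)'' to reduce to $\int_{[-1,1]^R}\prod_{i<j}|x_i-x_j|\prod_i|x_i-T|$. That is false as stated: the measure on $\Omega_{R,S}$ written in root coordinates contains the cross Vandermonde $\prod_{i,j}|x_i-z_j|^2$, so the complex roots are coupled to the real ones and their integral is a nontrivial function of $x_1,\dots,x_R$, not a constant. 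The same coupling problem recurs in your main mechanism: after splitting $[-1,1]$ at the interior node $T$, the two groups of real roots (and the complex roots) remain tied together by Vandermonde cross terms, and there is no ``standard Selberg-integral identity for a partitioned Vandermonde'' that evaluates the resulting glued integral; an Aomoto-type integral with an interior singular node $|x_i-T|$ at exponent one is likewise not a classical evaluation. Consequently the asserted factorization of $\int_{\Omega_N}|P(T)|$ into a ``left of $T$'' polynomial times a ``right of $T$'' polynomial is exactly what would have to be proved; the fact that the conjectured right-hand side happens to be a product of two explicit polynomials is suggestive but does not by itself imply that the integral decouples, and phrases such as ``I expect the sum to collapse via Chu--Vandermonde'' and ``if that factorization goes through'' mark precisely the points where the argument is missing.

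One smaller point: your $N=1$ sanity check does not reveal a normalization problem --- it is an arithmetic slip on your side. For $N=2m+1$ with $m=0$ the second factor is $\sum_{k=0}^{1}\binom{2-2k}{1-k}\binom{2k}{k}T^{2k}=\binom{2}{1}+\binom{2}{1}T^{2}=2+2T^{2}$, so the conjectured right-hand side is $\tfrac{1}{4}(2+2T^{2})=\tfrac{1+T^{2}}{2}$, which agrees exactly with your direct computation of $\tfrac{1}{D_1}\int_{\Omega_1}|P(T)|$. So the low-degree evidence is consistent with the conjecture; what is missing is any argument that closes it in general, and your outline, while pointing at the right objects (the signature decomposition, the sign rule $\mathrm{sign}\,P(T)=(-1)^{\#\{x_i>T\}}$, and one-sided Aomoto-type integrals), does not supply the key decoupling or evaluation steps.
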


Using this, we can say quite a bit about the explicit expectations $r_{N}$ appearing above.
There does not seem to be a closed form for $r_N$, but there is a nice relation between
$r_{2N+1}$ and $r_{2N}$, which comes down to an identity which may be proved
using Zeilberger's algorithm.
\begin{lemma} \label{lemma:zeil} Assume Conjecture~\ref{conj:absolute}.
There is an equality:
$$(2N+1 - r_{2N+1}) = \frac{4N+3}{4N+1} (2N - r_{2N}),$$
or
$$r_{2N+1} = \left(\frac{3+4N}{1+4N}\right) r_{2N} + \frac{1}{4N+1}.$$
\end{lemma}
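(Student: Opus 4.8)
The plan is to extract $r_N$ directly from Conjecture~\ref{conj:absolute} and then reduce the desired identity to a single hypergeometric-type binomial identity amenable to Zeilberger's algorithm. First, recall from the discussion preceding the lemma that $r_N = \int_{\Omega_N} Z(P)\,/\,D_N$ satisfies $r_N D_N = \int_{-1}^1 \int_{\Omega_{N-1}} |P(T)|\,dV\,dT$, so that $r_N = \int_{-1}^1 \bigl(\frac{1}{D_{N-1}}\int_{\Omega_{N-1}}|P(T)|\bigr)\,dT$. Substituting the conjectural formula for $\frac{1}{D_{N-1}}\int_{\Omega_{N-1}}|P(T)|$ (with $N-1$ in place of $N$) and integrating the resulting polynomial in $T$ term by term over $[-1,1]$ (using $\int_{-1}^1 T^{2k}\,dT = 2/(2k+1)$), one obtains a closed expression for $r_{2N}$ and $r_{2N+1}$ as finite double sums of products of central binomial coefficients divided by $2k_1+k_2+1$-type denominators. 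The key point is that $2N-r_{2N}$ and $2N+1-r_{2N+1}$ will both simplify: since the total number of roots is $N$, the quantity $N - r_N$ counts the expected number of non-real roots, and the formula for it should be a \emph{single} sum rather than a double sum, because the "total degree" factor $\sum_k \binom{2m-2k}{m-k}\binom{2k}{k}T^{2k}$ telescopes against the $2N$ (resp.\ $2N+1$) when one uses the identity $\sum_{k=0}^m \binom{2m-2k}{m-k}\binom{2k}{k} = 4^m$.

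Concretely, I would carry out the steps in the following order. Step 1: write $r_{N}$ explicitly from the conjecture as above. Step 2: compute $2N - r_{2N}$ by combining the "$(2m-2k+1)/(2m+1)$-weighted" factor and the unweighted factor; here $N-1 = 2m$ corresponds to the even case of the conjecture with the roles of the exponents. One should find that $2N - r_{2N}$ equals a normalizing constant times a single sum $\sum_j c_j$ where $c_j$ is an explicit product of two central binomial coefficients over a linear denominator. Step 3: do the same for $2N+1 - r_{2N+1}$, using the odd case of the conjecture (so $N-1 = 2m+1$, and the second factor runs to $m+1$). Step 4: form the ratio $(2N+1-r_{2N+1})/(2N-r_{2N})$ and reduce the claimed equality to the pure binomial identity
\[
\sum_{j} c'_j = \frac{4N+3}{4N+1} \sum_j c_j,
\]
where $c_j, c'_j$ are the explicit summands from Steps 2--3. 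Step 5: verify this last identity by exhibiting, via Zeilberger's algorithm (creative telescoping), a first-order recurrence in $N$ satisfied by each side, check the recurrences agree, and match one initial value (say $N=0$ or $N=1$, which can be read off from the list $0,1,\tfrac23,\tfrac{17}{15},\dots$ given in \S\ref{section:moments}).

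The main obstacle I expect is Step 2--3: organizing the term-by-term integration so that the double sum collapses to a single sum. The integration over $[-1,1]$ introduces denominators $2(2k_1+2k_2)+1$ (coming from the product of the two polynomials in $T$), and it is not a priori obvious that the resulting double sum telescopes; one likely needs a Vandermonde–Chu or Pfaff–Saalschütz type contiguous-coefficient identity to collapse the inner sum, and getting the bookkeeping of the normalizing constants $1/(2^{2m}\binom{2m}{m})$ versus $1/(2^{2m+2}\binom{2m}{m})$ exactly right (including the shift $m \mapsto$ value coming from $N-1$ being even or odd) is where sign and index errors are most likely. Once the two sides are genuine single hypergeometric sums in $N$, Step 5 is mechanical: both sides are Gosper–Zeilberger summable, so the certificate produced by the algorithm, together with a single base case, finishes the proof. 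I would present the Zeilberger certificate explicitly (or cite the relevant package) so the verification is reproducible, exactly as the phrase "an identity which may be proved using Zeilberger's algorithm" in the statement anticipates.
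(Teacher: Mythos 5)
Your overall plan---substitute Conjecture~\ref{conj:absolute} (applied in degree $N-1$) into $r_N D_N=\int_{-1}^{1}\int_{\Omega_{N-1}}|P(T)|\,dV\,dT$, integrate term by term, and prove the resulting binomial identity by creative telescoping plus one initial value from the table of $r_N$'s---is exactly the route the paper intends; the paper itself offers no more than the remark that the relation ``comes down to an identity which may be proved using Zeilberger's algorithm,'' so your Step~1 and Step~5 are the natural fleshing-out of that sketch. But two things in the middle do not hold up. First, a bookkeeping error at the very start: from $r_N D_N=\int_{-1}^{1}\int_{\Omega_{N-1}}|P(T)|$ you get $r_N=\frac{D_{N-1}}{D_N}\int_{-1}^{1}\bigl(\frac{1}{D_{N-1}}\int_{\Omega_{N-1}}|P(T)|\bigr)dT$, not $r_N=\int_{-1}^{1}\bigl(\frac{1}{D_{N-1}}\int_{\Omega_{N-1}}|P(T)|\bigr)dT$. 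The dropped ratio is $D_{2N-1}/D_{2N}$ in one case and $D_{2N}/D_{2N+1}$ in the other, and these are \emph{not} equal, so the factor cannot be waved away when you form the quotient in Step~4. (Also, the $T$-integration produces denominators $2k_1+2k_2+1$, not $2(2k_1+2k_2)+1$.)

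The more serious gap is the claimed collapse in Steps~2--3. After integrating, each summand carries the coupled denominator $1/(2k_1+2k_2+1)$ together with \emph{all four} binomial factors and the weight $(2m-2k_1+1)$; because the denominator ties $k_1$ to $k_2$, neither factor can be summed out on its own, so the evaluation $\sum_k\binom{2m-2k}{m-k}\binom{2k}{k}=4^m$ simply does not apply (the related identity $\sum_{i+j=k}\binom{2i}{i}\binom{2j}{j}4^{-i-j}=1$ is used in the paper only for the asymptotic \emph{estimate} of $r_N$, after the coupled denominator has been bounded and pulled out). You give no argument that $2N-r_{2N}$ or $2N+1-r_{2N+1}$ is a single hypergeometric sum, and the heuristic ``$N-r_N$ counts non-real roots'' does not produce one; consequently Step~5 as stated (ordinary single-variable Zeilberger applied to two single sums) is never reached. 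The repair is to treat the target as what it is, an identity between \emph{double} sums: either run a multivariate creative-telescoping algorithm (Sister Celine/Wegschaider-type multisum packages) directly on the two double sums, matching the resulting recurrences in $N$ and the initial values $r_2=\tfrac23$, $r_3=\tfrac{17}{15}$, or first establish an actual collapse of the inner sum by a contiguous-relation argument---but that collapse is precisely the missing ingredient, not a consequence of the identity you cite. With that substitution (and the $D$-ratios restored), the rest of your plan is sound and reproduces what the paper asserts.
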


\medskip

We now turn to asymptotic estimates of $r_N$. The exact arguments depend
on the parity, but they are basically the same in either case (they are also equivalent by  Lemma~\ref{lemma:zeil} above). Let us assume that
$N-1=2m$.  Then 
{\footnotesize
$$r_N  = \frac{1}{D_N} \int_{\Omega_N} Z(P) = 
 \frac{D_{N-1}}{D_N}
 \sum \sum  \frac{1}{2^{2m} \binom{2m}{m}}
 \frac{2(2i+1)}{(2m+1)(4m-2i-2j+1)} 
\binom{2i}{i} \binom{2j}{j}  \binom{2m-2i}{m-i} 
\binom{2m-2j}{m-j},$$
}
which we can write as
$$\frac{(2m+1)! (2m)!}{2^{4m+1} m!^4}   \sum \sum  \frac{2(2i+1)}{(2m+1)(4m-2i-2j+1)} 
 \frac{
 \displaystyle{
 \binom{2i}{i} \binom{2j}{j}  \binom{2m-2i}{m-i} 
\binom{2m-2j}{m-j}
}}{
\displaystyle{\binom{2m}{m}^2}}
$$
Asymptotically,
$$\frac{(2m+1)! (2m)!}{2^{4m+1} m!^4} = \frac{1}{\pi} + \frac{m^{-1}}{4 \pi}  + \ldots $$
Without this term, we can write the rest of the sum as:
$$s_N:= \sum \sum \frac{2(2m-2i+1)}{2m+1} \frac{1}{2i+2j+1}   \binom{2i}{i} \binom{2j}{j} 2^{-2i-2j} 
\prod_{k=m-i+1}^{m} \left(1 +  \frac{1}{2k-1}  \right)  \prod_{k=m-j+1}^{m} \left(1 + \frac{1}{2k-1}  \right)$$
Let us estimate the contribution to this sum coming from terms where $i + j < A$ and $A = \eps \cdot m$. This
will constitute the main term.
For a lower bound, note that the final product is clearly at least one, and that $i$ is at most~$A$. Hence
the sum is certainly bounded below by
$$\sum_{k<A} \sum_{i+j=k} 2 \left(1 - \frac{2A}{2m+1} \right) \frac{1}{2k+1} \binom{2i}{i}   \binom{2j}{j} 2^{-2i-2j}.$$
Since
 $$\sum_{i+j = k} \binom{2i}{i}   \binom{2j}{j} 2^{-2i-2j} = 1,$$
the lower bound is at least
$$\sum_{k<A}  2 \left(1 - \frac{2A}{2m+1} \right) \frac{1}{2k+1} \ge (1 - \eps) \left( \log(m) + \log(\eps) \right) + O(1).$$
The~$O(1)$ factor depends on~$\eps$, and should be thought
of as an estimate as~$m \rightarrow \infty$ with~$\eps$ fixed.
On the other hand, we have an upper bound for the sum given by noting that
$$\sum_{k=m-A+1}^{m} \log \left( 1 + \frac{1}{2k-1}  \right)
 \le  \sum_{k=m-A+1}^{m} \frac{1}{2k-1} = - \frac{1}{2}\log(1 - \eps) + o(1),$$
 leading to an upper bound for the sum above of the form:
 $$\sum_{k<A}  \frac{2}{2k+1} \cdot \frac{1}{1 - \eps}  \le  \left(\frac{ \log(m) + \log(\eps)}{1 - \eps}  \right) + O(1).$$
We now give an upper bound for the remaining sum. The initial factor is certainly at most~$2$, and $(2i + 2j + 1)^{-1} \le (2A + 1)^{-1}\le 1/(2A)$.
Hence an upper bound is given by
$$\sum_{k\geq A} \sum_{i+j=k} \frac{1}{A}  \binom{2i}{i}   \binom{2j}{j} 2^{-2i-2j}
\prod_{k=m-i+1}^{m} \left(1 +  \frac{1}{2k-1}  \right)  \prod_{k=m-j+1}^{m} \left(1 + \frac{1}{2k-1}  \right).$$
Note that we include terms in this sum with~$k<A$ --- this only increases
the sum, but restores some symmetry.
In particular,
the sum
is symmetric in $i \mapsto m-i$ and $j \mapsto m-j$, so after introducing a factor of~$4$, we may assume
that~$i$ and~$j$ are both at most~$m/2$.
In this range, we have the upper estimate
$$\sum_{k=m-i+1}^{m} \log \left( 1 + \frac{1}{2k-1}  \right) 
 \le  \sum_{k=m-\frac{m}{2}+1}^{m} \frac{1}{2k-1} \le \frac{\log(2)}{2} + o(1).$$
 Hence our remaining sum is bounded above by
 $$\sum_{k} \sum_{i+j=k} \frac{4}{A}  \binom{2i}{i}   \binom{2j}{j} 2^{-2i-2j} \cdot 2
 \le \frac{8m}{A} = \frac{8}{\eps}.$$
Combining our two estimates, we find that, for~$\eps$ fixed and~$m \rightarrow \infty$,
$$ 1 - \eps + o(1) \le \frac{s_N}{\log m} \le \frac{1}{1 - \eps} + o(1),$$
and thus~$s_N \sim \log m \sim \log N$. Note that in order to obtain a better estimate (the second term, for example), we would have
to be more careful about the dependence of the error terms on~$\eps$. 
 The analysis for $N = 2m+1$ is very similar.
Hence, since $r_N \sim (1/\pi) \cdot s_N$,
 we deduce the following:
 
 \begin{theorem} Assume Conjecture~\ref{conj:absolute}.  If $r_N$ denotes
  the expected number
 of zeros of a polynomial in $\Omega_N$, then, as $N \rightarrow \infty$,
 $$r_N \sim \frac{1}{\pi} \log N.$$
 \end{theorem}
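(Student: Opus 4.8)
The plan is to reduce the asymptotic statement $r_N \sim \frac{1}{\pi}\log N$ to the sum estimate $s_N \sim \log N$ that has just been carried out in the excerpt for the case $N-1 = 2m$, and then to supply the essentially identical analysis in the odd case $N-1 = 2m+1$ (or, more economically, to deduce it from the even case via Lemma~\ref{lemma:zeil}). Concretely, starting from Conjecture~\ref{conj:absolute} one writes $r_N = \frac{1}{D_N}\int_{\Omega_N} Z(P)$ using the pullback identity $\int_{\Omega_N} Z(P) = \int_{-1}^{1}\int_{\Omega_{N-1}}|P(T)|\,dV\,dT$, then substitutes the conjectural product-of-sums formula for $\int_{\Omega_N}|P(T)|$, integrates the resulting polynomial in $T$ term by term over $[-1,1]$ (using $\int_{-1}^{1} T^{2i+2j}\,dT = \frac{2}{2i+2j+1}$), and collects the combinatorial factors. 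This produces exactly the double sum displayed in the excerpt, with prefactor $\frac{(2m+1)!(2m)!}{2^{4m+1}m!^4}$ which, by Stirling, is $\frac{1}{\pi} + O(1/m)$.

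Next I would isolate the combinatorial heart, the sum $s_N$, and invoke the two-sided bound already established, namely $1 - \eps + o(1) \le s_N/\log m \le \frac{1}{1-\eps} + o(1)$ for each fixed $\eps > 0$ as $m \to \infty$. Letting $\eps \to 0$ after $m \to \infty$ gives $s_N \sim \log m \sim \log N$. Here the two ingredients doing the work are the identity $\sum_{i+j=k}\binom{2i}{i}\binom{2j}{j}2^{-2i-2j} = 1$ (which makes the leading behavior of the inner sums transparent and reduces the lower bound to $\sum_{k<A}\frac{2}{2k+1} \sim \log A \sim \log m$), and the harmonic-sum estimate $\sum_{k=m-i+1}^{m}\log(1 + \frac{1}{2k-1}) \le \sum_{k=m-i+1}^{m}\frac{1}{2k-1}$ controlled by $-\frac12\log(1-\eps)$ or $\frac{\log 2}{2}$ depending on the range of $i$, which shows the "tail" products contribute only $O(1/\eps)$ and do not affect the logarithmic main term.

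Combining, $r_N = \left(\frac{1}{\pi} + O(1/m)\right)s_N \sim \frac{1}{\pi}\log N$ in the even case. For $N = 2m+1$ I would either repeat the same bookkeeping — the only change is that the second sum in Conjecture~\ref{conj:absolute} runs to $m+1$ and the prefactor is $\frac{1}{2^{2m+2}\binom{2m}{m}}$, neither of which disturbs the asymptotics — or simply quote Lemma~\ref{lemma:zeil}: from $r_{2N+1} = \frac{3+4N}{1+4N}r_{2N} + \frac{1}{4N+1}$ and $r_{2N} \sim \frac{1}{\pi}\log(2N)$ one gets $r_{2N+1} \sim \frac{1}{\pi}\log(2N) \sim \frac{1}{\pi}\log(2N+1)$, since $\frac{3+4N}{1+4N} \to 1$ and the additive term is $o(\log N)$.

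The main obstacle is not any single clever trick but rather the care needed in the error analysis of $s_N$: the constants hidden in the $O(1)$ terms depend on $\eps$, so the limits must be taken in the correct order ($m \to \infty$ first, then $\eps \to 0$), and one must verify that the symmetrization $i \mapsto m-i$, $j \mapsto m-j$ used to bound the tail is legitimate (it is, because the combinatorial weight $\binom{2i}{i}\binom{2m-2i}{m-i}$ is symmetric under $i \mapsto m-i$, though the factor $\frac{2(2i+1)}{(2m+1)(4m-2i-2j+1)}$ is not, which is why one passes to an upper bound by dropping it in favor of $\frac{1}{A}$ before symmetrizing). Making this rigorous — in particular checking that including extra terms with $k < A$ in the tail sum genuinely only increases it, and that the resulting over-counted sum is still $O(1/\eps)$ — is the portion requiring the most attention, but it is routine once the structure above is in place.
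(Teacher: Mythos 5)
Your proposal is correct and follows essentially the same route as the paper: express $r_N$ via the pullback identity and Conjecture~\ref{conj:absolute}, extract the Stirling prefactor $\tfrac{1}{\pi}+O(1/m)$, establish $s_N \sim \log m$ by the same two-sided $\eps$-cutoff estimate (with limits taken in the order $m \to \infty$ then $\eps \to 0$), and treat the opposite parity either by repeating the bookkeeping or via Lemma~\ref{lemma:zeil}, exactly as the paper indicates.
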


\begin{remark} \emph{Note that, in the Kac~model of random polynomials (where the coefficients
are independent normal variables with mean~$0$), the expected number of
zeros of a polynomial of degree~$N$ is $2/\pi \log N$, and the expected
number in the interval~$[-1,1]$ is  $1/\pi \log N$~\cite{Kac}}
\end{remark}

Note that, for any $[a,b] \subset [-1,1]$, the integral 
$$\frac{1}{D_N} \int_{a}^{b} \int_{\Omega_N} |P(T)|$$
gives the expected number of zeros in the interval $[a,b]$. When $[a,b]$ does
not contain either~$1$ or $-1$, the answer is particularly simple as $N \rightarrow \infty$.

\begin{theorem}  Assume Conjecture~\ref{conj:absolute}.
For a fixed interval $[a,b]$ with $-1 < a < b < 1$, the expected number
of zeros in $[a,b]$ of a polynomial in $\Omega_N$ equals, as $N \rightarrow \infty$,
$$ \frac{1}{\pi} \int^{b}_{a} \frac{1}{1 - T^2} = \frac{1}{2 \pi}
\log \left|\frac{(1-a)(1+b)}{(1+a)(1-b)}\right|.$$
\end{theorem}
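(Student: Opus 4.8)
The plan is to combine the explicit polynomial expression for $\frac{1}{D_N}\int_{\Omega_N}|P(T)|$ from Conjecture~\ref{conj:absolute} with the observation that, for $T$ in a fixed compact subinterval $[a,b]\subset(-1,1)$, the two binomial sums appearing in that product are both partial sums of the expansion $(1-4t)^{-1/2}=\sum_k\binom{2k}{k}t^k$, evaluated at $t=T^2/4<1/4$. First I would show that as $N\to\infty$ (equivalently $m\to\infty$) the sum $\sum_{k=0}^{m}\binom{2m-2k}{m-k}\binom{2k}{k}T^{2k}$, after dividing by its ``full'' value, converges — uniformly for $T\in[a,b]$ — to $\tfrac{1}{\sqrt{1-T^2}}$; the tail $\sum_{k>m}$ that one adds to complete the series is negligible because $\binom{2k}{k}(T^2/4)^k$ decays geometrically in $k$ with ratio $T^2<1$, while the Stirling estimate $\binom{2m}{m}\sim 4^m/\sqrt{\pi m}$ controls the normalizing factor $2^{-2m}\binom{2m}{m}^{-1}$ against which one measures. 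The factor $\tfrac{2m-2k+1}{2m+1}$ in the first sum tends to $1$ uniformly over the range $k=o(m)$ that carries the mass, so it does not affect the limit of the first sum either. Hence, uniformly on $[a,b]$,
$$\frac{1}{D_N}\int_{\Omega_N}|P(T)|\ \longrightarrow\ \frac{D_{N-1}}{D_N}\cdot\frac{1}{1-T^2}\cdot c_N^{-1},$$
and tracking the elementary prefactors (the $D_{N-1}/D_N$ ratio, which by the formulas in Theorem~\ref{thm:one} and Stirling behaves like $\tfrac{1}{\pi}\cdot(\text{const})\cdot$ a ratio of the $2^{-2m}\binom{2m}{m}$ normalizers) one extracts exactly the density $\tfrac{1}{\pi}\cdot\tfrac{1}{1-T^2}$. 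This bookkeeping is identical in spirit to the $r_N\sim\tfrac1\pi\log N$ computation already carried out in the text, where the asymptotic $\tfrac{(2m+1)!(2m)!}{2^{4m+1}m!^4}=\tfrac1\pi+O(m^{-1})$ does precisely this job.

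Next I would pass from pointwise (indeed locally uniform) convergence of the integrand to convergence of $\frac{1}{D_N}\int_a^b\int_{\Omega_N}|P(T)|\,dT$. Since the convergence $\frac{1}{D_N}\int_{\Omega_N}|P(T)|\to\tfrac{1}{\pi(1-T^2)}$ is uniform on the compact set $[a,b]$ (the geometric-decay tail bound above is uniform there), one simply integrates the limit: the expected number of zeros in $[a,b]$ converges to $\tfrac1\pi\int_a^b\tfrac{dT}{1-T^2}$. The elementary antiderivative $\int\tfrac{dT}{1-T^2}=\tfrac12\log\bigl|\tfrac{1+T}{1-T}\bigr|$ then yields the stated closed form $\tfrac{1}{2\pi}\log\bigl|\tfrac{(1-a)(1+b)}{(1+a)(1-b)}\bigr|$ after combining the logarithms.

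The main obstacle is the uniformity of the limit near the endpoints $T=\pm1$: as $a\to-1$ or $b\to 1$ the decay ratio $T^2$ approaches $1$, the geometric tail bound degrades, and in fact the limiting density $\tfrac{1}{\pi(1-T^2)}$ is non-integrable there — this is why the statement must exclude a neighbourhood of $\pm1$, and why the contribution of zeros near $\pm1$ (responsible for the $\tfrac1\pi\log N$ growth in the global count) is genuinely different in character. So the real work is to make the error estimate in $\frac{1}{D_N}\int_{\Omega_N}|P(T)|=\tfrac{1}{\pi(1-T^2)}(1+o(1))$ explicit with an $o(1)$ that is uniform on each fixed $[a,b]\Subset(-1,1)$ but is allowed to blow up as $[a,b]\to[-1,1]$; quantitatively one wants something like an error $O\bigl(N^{-1}(1-b^2)^{-c}\bigr)$ obtained by bounding the dropped tail $\sum_{k>m}\binom{2k}{k}(T^2/4)^k$ by a geometric series in $T^2$ and the fluctuation of $\tfrac{2m-2k+1}{2m+1}$ over the effective range $k\lesssim m/\log m$. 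Everything else — the Stirling asymptotics for the prefactors and the final integration — is routine.
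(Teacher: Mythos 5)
Your proposal is correct and follows essentially the same route as the paper's (much terser) proof: substitute the conjectural formula, note that on a fixed compact subinterval of $(-1,1)$ the normalized integrand converges uniformly to $\frac{1}{\pi}\left(\sum_{k\ge 0}4^{-k}\binom{2k}{k}T^{2k}\right)^2=\frac{1}{\pi(1-T^2)}$ (with the prefactor bookkeeping done exactly as in the $r_N$ computation), and then integrate the uniform limit over $[a,b]$. The only cosmetic imprecision is calling the sums ``partial sums'' of $(1-4t)^{-1/2}$ --- one really needs $\binom{2m-2k}{m-k}\big/\binom{2m}{m}\to 4^{-k}$ for $k=o(m)$ together with the geometric tail bound in $T^{2}$ that you describe --- but that is precisely the estimate you sketch, so there is no gap.
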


\begin{proof} The argument is similar (but easier) to the asymptotic computation
of~$r_N$ above. We consider the case $N-1=2m$, the other case is similar.
In some fixed interval $[a,b]$ not containing $\pm 1$,  the integrand converges  uniformly to
$$ \frac{1}{\pi}\left(\sum_{k=0}^{\infty} \frac{1}{4^k} \binom{2k}{k} T^{2k} \right)^2
= \frac{1}{\pi} \left(\frac{1}{\sqrt{1-T^2}}\right)^2,
$$
from which the result follows.
\end{proof}

It is interesting to note that this is exactly the density function of real zeros
for random power series~\cite{Shepp}. This is also the limit density for Kac 
polynomials, which converge to random power series. 
This suggests that random polynomials in~$\Omega_N$, might, in some sense
to be made precise, approximate random power series for large~$N$. This
lends some credence to the spectulations in~\ref{remark:Poonen}.

\section{Counting Polynomials}
\label{section:count}

Once we have formulas for volumes, the corresponding count of polynomials is fairly
elementary.

\begin{lemma} Let $\Omega \subset \R^N$ be a closed compact region such that
$\partial \Omega$ is contained in a finite union of algebraic varieties of codimension~$1$.  Let $\Omega(X) \subset \R^N$ denote image of $\Omega$ under
the stretch $[X,X^2,\ldots,X^N]$. Then the number of lattice points in the interior $\Omega(X)$ is
$$\Vol(\Omega) X^{N(N+1)/2} \left(1 + O \left(\frac{1}{X} \right) \right).$$
\end{lemma}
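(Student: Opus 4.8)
The plan is to count lattice points in the stretched region $\Omega(X)$ by the standard ``volume is the leading term, boundary controls the error'' argument, carefully tracking how the anisotropic stretch $[X, X^2, \ldots, X^N]$ amplifies both quantities. First I would observe that $\Omega(X)$ is the image of $\Omega$ under the linear map $L_X = \diag(X, X^2, \ldots, X^N)$, so $\Vol(\Omega(X)) = X^{1+2+\cdots+N} \Vol(\Omega) = X^{N(N+1)/2}\Vol(\Omega)$. The number of lattice points in the interior of $\Omega(X)$ then differs from $\Vol(\Omega(X))$ by an error bounded (up to a dimensional constant) by the number of unit cubes $c + [0,1]^N$, $c \in \Z^N$, that meet the boundary $\partial\Omega(X)$; this is the classical Davenport-style estimate. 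So the whole problem reduces to bounding the number of such boundary-meeting cubes by $O(X^{N(N+1)/2 - 1})$.

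Next I would use the hypothesis that $\partial\Omega$ lies in a finite union of codimension-$1$ algebraic varieties $V_1, \ldots, V_r$. Under $L_X$ each $V_i$ maps to an algebraic variety $L_X(V_i)$ of the same dimension $N-1$ and with degree bounded independently of $X$ (the stretch is linear, so it does not increase degree). The number of lattice cubes meeting a fixed-degree affine variety of dimension $d$ inside a box of side lengths $X, X^2, \ldots, X^N$ is $O\big(\sum X^{a_1 + \cdots + a_d}\big)$, the sum over $d$-subsets $\{a_1 < \cdots < a_d\} \subset \{1, \ldots, N\}$, by a theorem of the Davenport/Cobeli--Zaharescu type on lattice points near varieties in boxes --- or, more elementarily here, one may slice: for each of the $O(X + X^2 + \cdots + X^{N-1})$ choices of the first $N-1$ coordinates of the cube $c$, the variety $L_X(V_i)$ (being codimension $1$) meets the corresponding ``column'' $\{c_1\}\times\cdots\times\{c_{N-1}\}\times\R$ in $O(1)$ points, since a generic line meets a degree-$D$ hypersurface in $\le D$ points and the finitely many non-generic columns can be absorbed by passing to the singular/ramification locus of lower dimension and inducting. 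Taking $d = N-1$ gives the bound $O(X^{N(N+1)/2 - N}) \cdot O(X^{N-1})$... wait, more cleanly: the dominant term of $\sum_{|I| = N-1} \prod_{a \in I} X^a$ comes from $I = \{2, 3, \ldots, N\}$, giving $X^{N(N+1)/2 - 1}$, which is exactly the error we want.

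Then I would assemble the pieces: interior lattice count $= \Vol(\Omega(X)) + O(X^{N(N+1)/2 - 1}) = X^{N(N+1)/2}\big(\Vol(\Omega) + O(1/X)\big)$, and since $\Vol(\Omega)$ is a positive constant this is $\Vol(\Omega) X^{N(N+1)/2}(1 + O(1/X))$ as claimed. I would also remark that the compactness of $\Omega$ ensures all the implied constants depend only on $\Omega$ (its diameter, the number and degrees of the $V_i$) and not on $X$, so the estimate is uniform as $X \to \infty$.

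The main obstacle is the boundary estimate with the anisotropic stretch: one must be sure that the amplification of the boundary ``area'' by $L_X$ genuinely costs only one power of $X$ relative to the volume, i.e. that the worst $(N-1)$-subset of exponents is $\{2, \ldots, N\}$ and not something forcing a weaker bound, and that the induction on dimension handling the non-transverse columns (where a line lies inside the hypersurface, or meets it in a lower-dimensional piece) terminates cleanly --- this is where the codimension-$1$, finite-union, fixed-degree hypotheses are all used. The rest is bookkeeping with the geometry of numbers.
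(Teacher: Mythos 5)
Your overall route is the same as the paper's: the main term is $\Vol(\Omega(X))=\Vol(\Omega)X^{N(N+1)/2}$, and the error is a Davenport-type boundary estimate for the anisotropically stretched box, dominated by the projection onto the coordinates $\{2,\dots,N\}$, of size $X^{2+3+\cdots+N}=X^{N(N+1)/2-1}$; the paper simply cites Davenport's lemma and identifies this dominant projection, exactly as your citation-based version does, so to that extent the proposal is fine. However, the ``more elementary'' slicing justification you sketch does not work as stated. First, the number of choices of the first $N-1$ integer coordinates of a cube is a product $\asymp X^{1+2+\cdots+(N-1)}=X^{N(N+1)/2-N}$, not the sum $O(X+X^2+\cdots+X^{N-1})$. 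More seriously, bounding the intersections of $L_X(V_i)$ with each axis-parallel line by the degree does not bound the number of unit cubes \emph{in that column} which the hypersurface meets: a hypersurface nearly parallel to the column direction (e.g.\ the ``vertical'' parts of $\partial\Omega(X)$) can pass through $\sim X^N$ cubes of a single column while crossing the column's axis line at most $\deg$ times; indeed, if your per-column count were $O(1)$ the total would be $O(X^{N(N+1)/2-N})$, which is smaller than the actual number of boundary cubes ($\asymp X^{N(N+1)/2-1}$ in general), so the claim cannot be right. The honest elementary argument (Davenport's own) is different in two ways: one slices along the \emph{shortest} direction, the quantity that is $O(1)$ per line is the discrepancy between the number of lattice points on the slice and the slice's length (number of intervals times a constant), and the leftover is handled by induction on the projection, which is precisely what produces the error as a sum of projection volumes over all proper coordinate subsets, with dominant term $X^{N(N+1)/2-1}$. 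Since you ultimately invoke the Davenport-type theorem anyway, your proof stands; just do not lean on the sketched per-column shortcut as a substitute for it.
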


\begin{proof} This essentially follows from ``Davenport's lemma" in~\cite{Dav1} and~\cite{Dav2}. With its conditions being met by our hypothesis, Davenport's result tells us that the main term is the volume of $\Omega(X)$, or $\Vol(\Omega) X^{N(N+1)/2}$. The error term should be $O(\max\lbrace V_d(\Omega(X))\rbrace)$, where $\max\lbrace V_d(\Omega(X))\rbrace$ denotes the greatest $d$-dimensional volume of any of its projections onto a $d$-dimensional coordinate hyperplane, $\forall d\in \lbrace1,2,\cdots,n-1\rbrace$. In our case, as $X$ is large, this largest projection is clearly the one onto the last $N-1$ coordinates, which has volume proportional to $X^2 \cdot X^3 \cdots X^N$. Therefore the error term is $O\left(X^{\frac{N(N+1)}{2}-1}\right)$.
\end{proof}

From this, it is easy to deduce:

\begin{lemma} The number of irreducible polynomials of signature $(R,S)$ all of
whose roots have absolute value at most~$X$ is
$$\Vol(\Omega_{R,S}) X^{N(N+1)/2} \left(1 + O \left(\frac{1}{X} \right) \right).$$
The number of irreducible Perron polynomials of signature $(R,S)$ all of
whose roots have absolute value at most~$X$ is
$$\Vol(\Omega^P_{R,S}) X^{N(N+1)/2} \left(1 + O \left(\frac{1}{X} \right) \right).$$
\end{lemma}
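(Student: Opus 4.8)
The plan is to deduce the lemma from the two immediately preceding lattice-point counting results together with the volume computations already established. The key point is simply to check that each of the regions $\Omega_{R,S}$ and $\Omega^{P}_{R,S}$ satisfies the hypotheses of the first lemma of this section, and that the count of lattice points in the stretched region $\Omega(X)$ agrees, up to the stated error, with the count of \emph{irreducible} polynomials of the prescribed signature whose roots all have absolute value at most $X$. First I would observe that a monic integer polynomial of degree $N$ whose coefficients after the stretch $[X, X^2, \ldots, X^N]$ land in $\Omega_{R,S}$ is precisely a monic integer polynomial of degree $N$, signature $(R,S)$, all of whose complex roots have absolute value $\le X$ (the stretch undoes the rescaling of roots by $X$); similarly for $\Omega^{P}_{R,S}$ one also imposes that the largest root is real and simple. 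So the lattice points of $\Omega(X)$ are in bijection with the relevant set of polynomials, and the cited lemma gives the count as $\Vol(\Omega_{R,S}) X^{N(N+1)/2}(1 + O(1/X))$ (resp. with $\Omega^{P}_{R,S}$).

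Next I would account for reducibility. A reducible monic integer polynomial of degree $N$ all of whose roots have absolute value $\le X$ factors as a product of monic integer polynomials of smaller degree, each again with all roots of absolute value $\le X$; by Kronecker's finiteness argument (quoted in the introduction) there are $O(X^{a})$ such factors in degree $d$ with $a \le d(d+1)/2$, and multiplying degree-$d$ and degree-$(N-d)$ contributions one gets at most $O(X^{d(d+1)/2 + (N-d)(N-d+1)/2})$ reducible polynomials; since $d(d+1)/2 + (N-d)(N-d+1)/2 \le N(N+1)/2 - 1$ for $1 \le d \le N-1$, the reducible ones are absorbed into the error term $O(X^{N(N+1)/2 - 1})$. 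The same bound handles the polynomials with a repeated largest root or a non-real largest root of maximal modulus in the Perron case, since these lie on a proper subvariety and so contribute a region of one lower dimension, again $O(X^{N(N+1)/2-1})$.

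Finally I would need the boundary hypothesis of the first lemma: $\partial \Omega_{R,S}$ (resp. $\partial \Omega^{P}_{R,S}$) is contained in a finite union of codimension-one algebraic varieties. This holds because these regions are cut out inside $\Omega_N$ by real-algebraic conditions — the vanishing of the discriminant and sign conditions on subresultants distinguish the strata $\Omega_{R,S}$, and the Perron condition is an inequality among root moduli whose boundary is again discriminant-type locus — so the boundaries are semialgebraic of codimension one, hence contained in the zero sets of finitely many polynomials. The main obstacle, such as it is, is this last bookkeeping point: being careful that the stratification of $\Omega_N$ into the $\Omega_{R,S}$ (and the further Perron cut) really does have algebraic boundary of the right codimension, rather than something that could contribute at order $X^{N(N+1)/2}$; once that is in place the rest is immediate from the quoted lemmas.
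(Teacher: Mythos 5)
Your proposal is correct and follows essentially the same route as the paper: apply the preceding Davenport-type lattice-point lemma to $\Omega_{R,S}$ (resp.\ $\Omega^{P}_{R,S}$), then discard reducible polynomials by noting they factor into monic integer polynomials of smaller degree with roots bounded by $X$, whose count is $O(X^{N(N+1)/2-1})$ and so is absorbed in the error term. Your additional remarks verifying the semialgebraic codimension-one boundary hypothesis and handling the Perron boundary strata are just details the paper leaves implicit, not a different argument.
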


\begin{proof} The first claim without the assumption of irreducibility follows from the previous lemma.
If a polynomial is reducible, however, then it factors as a product of two polynomials
each of which is monic (by Gauss) and has roots less than $X$ by assumption. Hence the
number of reducible factors is
$$\ll \sum_{A+B = N}^{A,B > 0} \Vol(\Omega_A) \Vol(\Omega_B) X^{A(A-1)/2 + B(B-1)/2}
= O\left(X^{\frac{N(N+1)}{2} - (N-1)}\right).$$
The argument for Perron polynomials is the same.
\end{proof}

Combining this with the four relevant integrals (that for~$D_N$ coming from
the remarks following Theorem~\ref{theorem:main}, for~$D^{P}_N$ from
Lemma~\ref{lemma:perronintegral}, for $D^{+}_N$ by the remarks in~\S\ref{section:realselberg},
and for $D^{-}_{2N}$ from Theorem~\ref{theorem:complex}), this proves Theorem~\ref{theorem:count}.

\section{Evaluating Integrals}
\label{section:evaluate}

We begin by evaluating the integral in Theorem~\ref{theorem:main}.
The integral $D_N = \displaystyle\int_{\Omega_N} dV$ was first computed by Fam in~\cite{Fam}.
His method also allows one to easily compute~$C_N(\alpha,T)$.
Let $\Omega_N(a_N)$ denote the intersection of~$\Omega_N$ with the
hyperplane where the last coefficient is fixed. Then Fam shows that
$\Omega_N(a_N)$ maps to $\Omega_{N-1}$ under a very explicit linear
transformation. Explicitly, 
$$\Omega_N(a_N)
 = T_{N-1} {\Omega}_{N-1},$$
where $T_{N-1}$ is the following matrix:
$$T_{N-1} = I_{N-1} + a_{N} \widehat{I}_{N-1},$$
for the identity matrix $I_{N-1}$ and the anti-diagonal matrix ($1$s on the anti-diagonal and $0$s
elsewhere)
$\widehat{I}_{N-1}$.
Explicitly, this takes the polynomial $Q(T) \in \Omega_{N-1}$ to  the polynomial
$$P(T) = T Q(T) +  a_N Q(1/T) T^{N-1} \in \Omega_{N}(a_N)$$
Recall that
$$C_N(\alpha,T) = \int_{\Omega_{N}} |a_N|^{\alpha - 1} P(T) dV.$$
Then we have
$$C_N(\alpha,T) = \int_{-1}^{1} \int_{\Omega_N(a_N)} |a_N|^{\alpha - 1} P(T)  dV d a_N.$$
Applying the change of coordinates above, we deduce that
$$C_N(\alpha,T) = \int_{-1}^{1} \int_{\Omega_{N-1}} |a_N|^{\alpha - 1} (T Q(T) +  a_N Q(1/T) T^{N-1})
 |\det(T_{N-1})| dV d a_N.$$
 By exchanging the order of integration and computing the determinant of~$T_{N-1}$, it follows that:
$$C_{N}(\alpha,T) =  T A_N C_{N-1}(1,T) + T^{N-1}  B_N C_{N-1}(1,1/T),$$
 where:
 $$A_{N} = \int_{-1}^{1} |t|^{\alpha - 1} (1 - t^2)^{(N-1)/2}dt, \  \text{$N$ odd},
\int_{-1}^{1} |t|^{\alpha - 1} (1 - t^2)^{(N-2)/2})( 1 + t)dt, \  \text{$N$ even},$$
and
$$B_{N}
 =  \int_{-1}^{1} |t|^{\alpha - 1} t (1 - t^2)^{(N-1)/2}dt, \  \text{$N$ odd},
 \int_{-1}^{1} |t|^{\alpha - 1} t (1 - t^2)^{(N-2)/2})( 1 + t)dt, \  \text{$N$ even},$$
Both of these integrals are special cases of Euler's beta integral and may be evaluated
explicitly. Theorem~\ref{theorem:main} follows easily by induction.
The evaluation of the integral
$S_N(\alpha,\beta):=\displaystyle{\int_{\Omega_N} P(1)^{\alpha - 1} P(-1)^{\beta - 1}}$ is similar.
The key point is that, under the substitution above with $P(T) = T Q(T) + a_N Q(1/T) T^{N-1} $, we have
$$P(1) = Q(1) + a_N Q(1), \qquad P(-1) = - Q(-1) + (-1)^{N-1} a_N Q(-1).$$
Recall that
$$\int_{-1}^{1} (1 -t)^{a-1} (1+t)^{b-1} dt = 2^{a+b-1} \int_{0}^{1} (1-x)^{a-1} x^{b-1} dx
= 2^{a+b-1} \frac{\Gamma(a) \Gamma(b)}{\Gamma(a+b)}.$$
Suppose that~$N=2m$ is even. Then
$$
\begin{aligned}
 \int_{-1}^{1} (1 + t)^{\alpha - 1}
(1 +  (-1)^N t)^{\beta - 1} \vert\det(T_{N-1})\vert dt =  & \ 
 \int_{-1}^{1} (1 + t)^{\alpha - 1}
(1 +  t)^{\beta - 1} (1+t)(1 - t^2)^{m-1} dt\\
=  \int_{-1}^{1} (1 + t)^{\alpha + \beta + m - 2}
(1 -  t)^{m - 1}  dt
= & \ 2^{\alpha + \beta + 2m - 2} \frac{\Gamma(\alpha +\beta + m-1) \Gamma(m)}{\Gamma(\alpha + \beta + 2m - 1)}. \end{aligned}
$$
Suppose that~$N=2m+1$ is odd. Then
$$ \begin{aligned}  \int_{-1}^{1} (1 + t)^{\alpha - 1}
(1 +  (-1)^N t)^{\beta - 1} \vert\det(T_{N-1})\vert dt = & \ 
 \int_{-1}^{1} (1 + t)^{\alpha - 1}
(1 -  t)^{\beta - 1} (1 - t^2)^{m} dt \\
=  \int_{-1}^{1} (1 + t)^{\alpha +m - 1}
(1 -  t)^{\beta + m - 1}  dt
= & \ 2^{\alpha + \beta + 2m - 1} \frac{\Gamma(\alpha +m) \Gamma(\beta + m)}{\Gamma(\alpha + \beta + 2m)}. \end{aligned}
$$
By induction, we deduce the following:
\begin{theorem} \label{theorem:selb} There is an equality:
$$\begin{aligned}
S_{N}(\alpha,\beta) = & \  \prod_{2k \le N}2^{\alpha + \beta + 2k - 2} \frac{\Gamma(\alpha +\beta + k-1) \Gamma(k)}{\Gamma(\alpha + \beta + 2k - 1)}
\prod_{2k+1 \le N} 2^{\alpha + \beta + 2k - 1} \frac{\Gamma(\alpha +k) \Gamma(\beta + k)}{\Gamma(\alpha + \beta + 2k)}\\
= & \  \prod_{k=1}^{N} \frac{2^{\alpha + \beta + k - 2}}{\Gamma(\alpha + \beta + k - 1)}
\prod_{2k \le N} \Gamma(\alpha +\beta + k-1) \Gamma(k)
\prod_{2k-1 \le N} \Gamma(\alpha +k-1) \Gamma(\beta + k-1).\end{aligned}$$
\end{theorem}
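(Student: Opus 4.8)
The plan is to run the same induction on $N$ that proves Theorem~\ref{theorem:main}: foliate $\Omega_N$ by the value of its top coefficient $a_N$, use Fam's linear isomorphism $\Omega_N(a_N) = T_{N-1}\Omega_{N-1}$ with $T_{N-1} = I_{N-1} + a_N\widehat{I}_{N-1}$, and exploit the fact that under the induced substitution $P(T) = TQ(T) + a_N T^{N-1}Q(1/T)$ both endpoint evaluations $P(1)$ and $P(-1)$ factor cleanly through $Q$. Setting $T = 1$ gives $P(1) = (1+a_N)Q(1)$, and setting $T = -1$ gives $P(-1) = -Q(-1) + (-1)^{N-1}a_N Q(-1)$, i.e.\ $P(-1) = -(1+a_N)Q(-1)$ when $N$ is even and $P(-1) = -(1-a_N)Q(-1)$ when $N$ is odd. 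Hence the integrand $P(1)^{\alpha-1}|P(-1)|^{\beta-1}$ splits as a function of $a_N$ alone times the corresponding integrand $Q(1)^{\alpha-1}|Q(-1)|^{\beta-1}$ on $\Omega_{N-1}$.

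First I would pin down signs. Since all roots of $Q\in\Omega_{N-1}$ lie in $\overline{B(1)}$, one has $Q(1)\ge 0$ and $|Q(-1)| = (-1)^{N-1}Q(-1)\ge 0$; as $1\pm a_N\ge 0$ on $[-1,1]$, this gives $|P(-1)| = (1+a_N)|Q(-1)|$ for $N$ even and $|P(-1)| = (1-a_N)|Q(-1)|$ for $N$ odd, so that
$$P(1)^{\alpha-1}|P(-1)|^{\beta-1} = \begin{cases} (1+a_N)^{\alpha+\beta-2}\,Q(1)^{\alpha-1}|Q(-1)|^{\beta-1}, & N \text{ even},\\ (1+a_N)^{\alpha-1}(1-a_N)^{\beta-1}\,Q(1)^{\alpha-1}|Q(-1)|^{\beta-1}, & N \text{ odd}.\end{cases}$$
Next I would record the Jacobian: $\det(T_{N-1}) = \det(I_{N-1}+a_N\widehat{I}_{N-1})$, and since $\widehat{I}_{N-1}$ is an involution conjugate to the diagonal matrix with $\lceil (N-1)/2\rceil$ entries $+1$ and $\lfloor(N-1)/2\rfloor$ entries $-1$, this equals $(1+a_N)^{\lceil (N-1)/2\rceil}(1-a_N)^{\lfloor(N-1)/2\rfloor}$, i.e.\ $(1-a_N^2)^{(N-1)/2}$ for $N$ odd and $(1+a_N)(1-a_N^2)^{(N-2)/2}$ for $N$ even, which is nonnegative on $[-1,1]$ so that its absolute value costs nothing.

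Assembling these factors and applying Fubini, the residual $\Omega_{N-1}$-integral is precisely $S_{N-1}(\alpha,\beta)$ while the $a_N$-integral is a Beta integral, yielding the recursion
$$S_N(\alpha,\beta) = \begin{cases} \displaystyle 2^{\alpha+\beta+2m-2}\,\frac{\Gamma(\alpha+\beta+m-1)\,\Gamma(m)}{\Gamma(\alpha+\beta+2m-1)}\,S_{N-1}(\alpha,\beta), & N = 2m,\\[2.2ex] \displaystyle 2^{\alpha+\beta+2m-1}\,\frac{\Gamma(\alpha+m)\,\Gamma(\beta+m)}{\Gamma(\alpha+\beta+2m)}\,S_{N-1}(\alpha,\beta), & N = 2m+1,\end{cases}$$
which is exactly the pair of evaluations displayed just above the statement (the $a_N$-integrands being $(1+t)^{\alpha+\beta+m-2}(1-t)^{m-1}$ and $(1+t)^{\alpha+m-1}(1-t)^{\beta+m-1}$ respectively); these converge for $\mathrm{Re}(\alpha),\mathrm{Re}(\beta)>0$, which I would assume throughout, the general case following by analytic continuation. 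With the base case $S_0(\alpha,\beta)=1$ (the space $\Omega_0$ is a single point, $P\equiv1$), iterating the recursion gives a product of the even-step factors over $\{2k\le N\}$ and the odd-step factors over $\{2k+1\le N\}$; the last step is a purely formal reindexing, collecting the powers of $2$ into $\prod_{k=1}^{N}2^{\alpha+\beta+k-2}$ and regrouping the Gamma factors into the shape asserted.

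I expect no genuine analytic obstacle here: once Fam's parametrization from the proof of Theorem~\ref{theorem:main} is available, the computation is mechanical. The only steps demanding care are the parity bookkeeping — both $|P(-1)|$ and $\det(T_{N-1})$ behave differently according as $N$ is even or odd, producing the two distinct Beta integrals above — and the final cosmetic rearrangement of the iterated product into closed form.
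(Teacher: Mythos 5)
Your proposal is correct and takes essentially the same route as the paper: Fam's parametrization $\Omega_N(a_N)=T_{N-1}\Omega_{N-1}$ with $P(T)=TQ(T)+a_N T^{N-1}Q(1/T)$, the resulting factorizations $P(1)=(1+a_N)Q(1)$ and $P(-1)=\bigl(-1+(-1)^{N-1}a_N\bigr)Q(-1)$, the parity-dependent Jacobian $|\det(T_{N-1})|$, the two Euler Beta integrals in $a_N$, and induction on $N$. The extra details you supply (signs of $Q(\pm 1)$, the base case, the reindexing of the iterated product) are exactly the bookkeeping the paper leaves implicit, and they check out.
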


\subsection{The integral \texorpdfstring{$C^{-}_{N}(\alpha)$}{CNA}}
\label{section:complex}

\begin{lemma} \label{lemma:smallint}
For non-negative integers $p_i$ and $q_i$, we have
  $$\int_{B(1)^S}   \prod_{i=1}^{S} 
|z_i|^{2(\alpha - 1)} z^{p_i-1}_i
 \overline{z}^{q_i-1}_i  
    \mathrm{sign}(\im(z_i)) = (\sqrt{-1})^S \cdot 
\begin{cases}
\displaystyle{\prod_{i=1}^{S} \frac{4}{2\alpha + p_i + q_i-2}
\cdot  \frac{1}{p_i - q_i}},
& \text{$p_i - q_i$ all odd}, \\
\\
0, & \text{else}. \end{cases}$$
\end{lemma}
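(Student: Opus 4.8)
The integrand on $B(1)^S$ is a product of functions of the individual variables $z_i$, so the plan is to apply Fubini and reduce to a single factor. Concretely, for $\Re(\alpha)$ large enough that the integral converges near the origin (the general case following by analytic continuation in $\alpha$, the resulting expression being rational in $\alpha$), the integral equals $\prod_{i=1}^{S} I(p_i,q_i)$, where
$$I(p,q) = \int_{B(1)} |z|^{2(\alpha-1)}\, z^{p-1}\, \overline{z}^{\,q-1}\, \sign(\im z)\, dA(z)$$
for non-negative integers $p,q$, with $dA$ the area measure on $\C$. It therefore suffices to prove that $I(p,q) = \sqrt{-1}\cdot \dfrac{4}{(2\alpha+p+q-2)(p-q)}$ when $p-q$ is odd, and $I(p,q)=0$ otherwise; multiplying the $S$ resulting expressions together and collecting the factor $(\sqrt{-1})^S$ yields the stated formula, the whole product vanishing as soon as one factor does.

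To evaluate $I(p,q)$ I would pass to polar coordinates $z = re^{i\theta}$, $dA = r\,dr\,d\theta$, which separates the integral completely:
$$I(p,q) = \left(\int_0^1 r^{\,2\alpha+p+q-3}\,dr\right)\left(\int_0^{2\pi} e^{i(p-q)\theta}\,\sign(\sin\theta)\,d\theta\right) = \frac{1}{2\alpha+p+q-2}\int_0^{2\pi} e^{i(p-q)\theta}\,\sign(\sin\theta)\,d\theta.$$
The angular factor is, up to normalization, the $(p-q)$-th Fourier coefficient of the square wave $\sign(\sin\theta)$, which is $+1$ on $(0,\pi)$ and $-1$ on $(\pi,2\pi)$. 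Writing $\int_0^{2\pi} = \int_0^{\pi} - \int_{\pi}^{2\pi}$ and evaluating the two resulting elementary exponential integrals, one gets $2\bigl((-1)^{p-q}-1\bigr)/\bigl(\sqrt{-1}\,(p-q)\bigr)$ when $p\ne q$ and $0$ when $p=q$; this is $0$ whenever $p-q$ is even and $4\sqrt{-1}/(p-q)$ whenever $p-q$ is odd. Combining with the radial factor $(2\alpha+p+q-2)^{-1}$ gives the claimed value of $I(p,q)$.

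The argument is entirely routine. The only points needing a moment's care are the appeal to Fubini together with the convergence of the radial integral at $r=0$ (both handled by first taking $\Re(\alpha)$ large and then continuing), and keeping the parity condition and the power of $\sqrt{-1}$ straight across the $S$ factors; there is no real obstacle here.
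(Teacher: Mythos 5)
Your proposal is correct and follows essentially the same route as the paper: factor the integral over the $S$ copies of $B(1)$, pass to polar coordinates, split the angular integral as $\int_0^\pi - \int_\pi^{2\pi}$ to handle $\sign(\im z)$, and evaluate the elementary radial and exponential integrals, giving $\frac{4}{2\alpha+p+q-2}\cdot\frac{\sqrt{-1}}{p-q}$ for $p-q$ odd and $0$ otherwise. The only addition is your explicit remark about convergence and analytic continuation in $\alpha$, which the paper leaves implicit.
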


\begin{proof} The complex integral decomposes as a product over each $B(1)$. Changing to polar
coordinates, we
have
$$\begin{aligned} \int_{B(1)} z^{(\alpha - 1) + (p-1)}  \overline{z}^{(\alpha - 1) + (q-1)} \sign(\im(z)) 
= & \ \left(\int_{0}^{ \pi}  - \int_{\pi}^{2 \pi} \right) \int_{0}^{1} r^{2 \alpha + p + q -3} e^{i(p-q) \theta} d r d \theta
\\
= & \ \frac{4}{2 \alpha + p + q -2} \cdot \frac{\sqrt{-1}}{p-q}, \end{aligned}$$
if $p-q$ is odd and is trivial otherwise, from which the result follows.
\end{proof}

For a complex number $z$, note that $|z - \overline{z}| = \sqrt{-1} \cdot \sign(\im(z)) ( \overline{z}-z)$.
Thus the absolute value of the Vandermonde of signature $(R,S)$ has the following expansion:
$$\begin{aligned}
\prod_{i > j} &  |x_i - x_j| \prod (x_i - z_j)(x_i - \overline{z}_j) 
\prod_{i > j}  |z_i -z_j|^2 |z_i - \overline{z}_j|^2  \prod |z_i - \overline{z_i}|  \\
& \ = 
(\sqrt{-1})^S \sum_{S_N} \sign(\sigma) \prod_{i=1}^{R} x^{\sigma(i)-1}_i  \prod_{j=1}^{S} z^{\sigma(R+2j-1)-1}_j 
\overline{z}^{\sigma(R+2j)-1}_j \sign(\im(z_j)) 
\end{aligned}
$$

Expanding out the integrand for $C^{-}_N(\alpha)$
term by term and integrating using Lemma~\ref{lemma:smallint} and then combining the two
powers of $(\sqrt{-1})^N$, we find
$$C^{-}_{N}(\alpha) = \frac{1}{N!} \int_{\Omega_{0,N}} |a_N|^{\alpha - 1} dV$$
$$ = \frac{(-1)^N}{N!} \sum_{S_{2N}} \sign(\sigma) \prod_{j=1}^{N} \frac{4}{(2 \alpha + \sigma(2j-1) + \sigma(2j) -2)}
\frac{1}{\sigma(2j-1) - \sigma(2j)},$$
Here the sum is over permutations $\sigma$ such that $\sigma(2j)$ and $\sigma(2j-1)$ are neither
 both odd nor even. Given any such $\sigma$, swapping the values of
 $\sigma(2j)$ and $\sigma(2j-1)$ changes the sign \emph{both}
  $\sign(\sigma)$ and
 $\sigma(2j-1) - \sigma(2j)$, and leaves everything else unchanged. Hence we may sum only
 over $\sigma$ such that $\sigma(2j)$ is even, after including an extra factor of~$2^N$ (the order of the stabilizer of the set of~$N$ unordered pairs). The set of
 permutations such that $\sigma(2j)$ is even is simply $S_N \times S_N$. The ``diagonal''
 copy of this group permutes the factors of the product. Hence we are reduced to the sum
$$\frac{2^N N! (-1)^N}{N!} \sum_{S_{N} \times 1} \sign(\sigma) \prod_{j=1}^{N} \frac{4}{(2 \alpha + \sigma(2j-1) + \sigma(2j) -2)}
\frac{1}{\sigma(2j-1) - \sigma(2j)},$$
where now $\sigma$ fixes the odd integers. Absorbing the $2^N$  and $(-1)^N$ factors into the product, we arrive at
the sum
$$ \sum_{S_{N} \times 1} \sign(\sigma) \prod_{j=1}^{N} \frac{8}{(2 \alpha +2j-1 + \sigma(2j) -2)}
\frac{1}{\sigma(2j) - 2j+1}.$$
Yet this we may recognize as simply the Leibnitz expansion of the determinant of the following matrix:
$$M_N =  \left[\frac{8}{(2 \alpha + 2i + 2j-3)(2i - 2j 
+ 1)}\right].$$
Hence $C^{-}_{N}(\alpha) =  \det M_N$. This determinant is simply a Cauchy determinant. 
Let us write $x_{i}$ for $2i = 2,\ldots,2N$ and $y_{j}$ for $2j-1 = 1,\ldots, 2N-1$, so then
$$M_N =  \left[\frac{8}{(2 \alpha + x_{i} + y_j-2)(x_i - y_j)}\right] = \left[\frac{1}{X_i - Y_j}\right],$$
where
$$X_i= \frac{x^2_i + 2 \alpha x_i - 2 x_i}{8} , \qquad Y_j =  
\frac{y^2_j + 2 \alpha y_j - 2 y_j}{8}.$$
Note that $X_i - X_j$ and $Y_i - Y_j$ factor into linear factors. By Cauchy's determinant formula,
we may compute this determinant as a product of linear forms, and specializing as above
we recover our formula for $C^{-}_N(\alpha)$ (after a certain amount of simplification with
products of factorials).

\subsection{The distribution of roots in \texorpdfstring{$\Omega_N$}{ON}.}
In this section, we prove that the roots of almost all polynomials in~$\Omega_N$
distribute nicely around the unit circle. We have already shown that the absolute values
are close to one in section~\ref{section:moments}, so to establish the radial symmetry
we could apply the results of Erd\"{o}s and Tur\'{a}n~\cite{Erdos}. In fact, it is most convenient
to prove both radial and absolute value distribution simultaneously using the nice formulation
of~\cite{Hughes}. The main point of that paper is to show that the quantity
$$F_N = \log \left( \sum_{i=0}^{N} |a_i| \right) - \frac{1}{2} \log |a_0| - \frac{1}{2} \log |a_N|$$
governs the behavior in both cases, and that it suffices to show that $F_N$ is $o(N)$. In fact,
we shall be able to obtain a much more precise estimate (logarithmic in~$N$) which could
be used to give quite refined qualitative bounds if so desired. Since $\log |a_0| = 0$ and $\log |a_N|$
was estimated in section~\ref{section:moments}, the main task is to obtain bounds on the $a_i$.
It is most natural to estimate integrals of the quantities $|a_i|^2$ over~$\Omega_N$, and this is what we do.
We begin with a preliminary lemma.

\begin{lemma} \label{lemma:eq}
If $N$ is even, then 
$$ \frac{D_{N-1}}{D_N}   \int_{-1}^{1} a_N  |\det(T_{N-1})|  d a_{N}
=   \frac{D_{N-1}}{D_N}   \int_{-1}^{1} a^2_N  |\det(T_{N-1})|  d a_{N} = \frac{1}{N+1}.$$
If $N$ is odd, then
$$ \frac{D_{N-1}}{D_N}   \int_{-1}^{1} a_N  |\det(T_{N-1})|  d a_{N} = 0, \   \frac{D_{N-1}}{D_N}   \int_{-1}^{1} a^2_N  |\det(T_{N-1})|  d a_{N} = \frac{1}{N+2}.$$
\end{lemma}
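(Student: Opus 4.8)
The plan is to reduce everything to the explicit one-variable integrals that already appear in the evaluation of $C_N(\alpha,T)$ in Section \ref{section:evaluate}. Recall that $\det(T_{N-1}) = \prod (1 \pm a_N)$-type products, and more precisely the computation there shows $|\det(T_{N-1})|$, after integrating out, contributes the factor $(1-t^2)^{(N-1)/2}$ when $N$ is odd and $(1-t^2)^{(N-2)/2}(1+t)$ when $N$ is even (writing $t = a_N$). So the four integrals in question are, up to the normalizing constant $D_{N-1}/D_N$, just
$$\int_{-1}^1 t^j (1-t^2)^{(N-1)/2}\,dt \quad (N \text{ odd}), \qquad \int_{-1}^1 t^j (1-t^2)^{(N-2)/2}(1+t)\,dt \quad (N \text{ even})$$
for $j = 1, 2$. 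These are elementary beta integrals: the odd powers of $t$ integrate to zero against the even weight $(1-t^2)^{(N-1)/2}$ (giving the vanishing in the odd case for $j=1$), and the surviving pieces are standard $\Gamma$-function expressions.

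First I would recall that $D_N = \int_{\Omega_N} dV$ and that Fam's change of variables gives $D_N = \int_{-1}^1 |\det(T_{N-1})|\,dt \cdot D_{N-1}$, so that $\frac{D_{N-1}}{D_N} = \left(\int_{-1}^1 |\det(T_{N-1})|\,dt\right)^{-1}$. Thus each claimed identity is simply a ratio of two explicit beta integrals, namely $\int_{-1}^1 t^j|\det(T_{N-1})|\,dt \big/ \int_{-1}^1 |\det(T_{N-1})|\,dt$. Second, I would evaluate these. For $N = 2m+1$ odd: $\int_{-1}^1 (1-t^2)^m\,dt$, $\int_{-1}^1 t(1-t^2)^m\,dt = 0$, and $\int_{-1}^1 t^2(1-t^2)^m\,dt$; the ratio of the last to the first is $\frac{1}{N+2} = \frac{1}{2m+3}$ by the beta-integral identity $\int_{-1}^1 t^2(1-t^2)^m dt = \frac{1}{2m+3}\int_{-1}^1(1-t^2)^m dt$ (integrate by parts, or use $B$-function values). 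For $N = 2m$ even, expand $(1+t)$ and discard the odd part: $\int_{-1}^1 t^j(1-t^2)^{m-1}(1+t)\,dt = \int_{-1}^1 t^j(1-t^2)^{m-1}\,dt + \int_{-1}^1 t^{j+1}(1-t^2)^{m-1}\,dt$, and one checks both $j=1$ and $j=2$ cases give the same ratio $\frac{1}{N+1} = \frac{1}{2m+1}$ against the normalization $\int_{-1}^1 (1-t^2)^{m-1}(1+t)\,dt = \int_{-1}^1(1-t^2)^{m-1}\,dt$; this is a short manipulation with the identities $\int_{-1}^1 t^2(1-t^2)^{m-1}dt = \frac{1}{2m+1}\int_{-1}^1(1-t^2)^{m-1}dt$ and $\int_{-1}^1 t^3(1-t^2)^{m-1}dt = 0$ together with $\int_{-1}^1 t(1-t^2)^{m-1}dt = 0$.

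There is no real obstacle here — the lemma is essentially bookkeeping — but the one point requiring a little care is matching the combinatorial normalization: one must be sure that the $D_{N-1}/D_N$ appearing is exactly the reciprocal of $\int_{-1}^1|\det(T_{N-1})|\,dt$, with no stray factorial factors from the $N!$-to-$1$ nature of the root parametrization, and that the parity split of $|\det(T_{N-1})|$ into $(1-t^2)^{(N-1)/2}$ versus $(1-t^2)^{(N-2)/2}(1+t)$ is taken verbatim from the displayed formulas for $A_N$ in Section \ref{section:evaluate} (these are literally the $\alpha = 1$ instances of those integrands). Once that identification is in place, the four evaluations are immediate from Euler's beta integral, and the coincidence of the $j=1$ and $j=2$ answers in the even case — which is the only mildly surprising feature — drops out of the computation.
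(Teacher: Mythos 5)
Your proposal is correct and is exactly the computation the paper intends (the paper states the lemma without proof, relying on the same ingredients from Section~\ref{section:evaluate}): the slice decomposition $\Omega_N(a_N)=T_{N-1}\Omega_{N-1}$ gives $D_N=D_{N-1}\int_{-1}^{1}|\det(T_{N-1})|\,da_N$ with no extraneous combinatorial factors, $|\det(T_{N-1})|$ equals $(1-a_N^2)^{(N-1)/2}$ for $N$ odd and $(1-a_N^2)^{(N-2)/2}(1+a_N)$ for $N$ even, and the stated ratios then follow from the beta-integral identities you cite, including the parity cancellations that make the $a_N$ and $a_N^2$ moments agree in the even case.
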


Let 
$$A_N(i) =  \frac{1}{D_N} \int_{\Omega_N}  a_i dV.$$
Note that $A_N(i) = 0$ unless~$i$ is even.
On the other hand, $A_N(i)$ 
is determined exactly by the coefficients of $C_N(1,T)$, namely, if $N = 2m+1$, or $N = 2m$, then
$$A_N(2m-2i) =
   \frac{2i+1}{2m+1} \frac{m!^2}{(2m)!}  \binom{2i}{i}
\binom{2m-2i}{m-i}.$$
In either case, one has the (easy) inequality $|A_N(i)| \le 1$.
We now consider the integrals:
$$A_N(i,j) = \frac{1}{D_N} \int_{\Omega_N}  a_i a_j dV.$$
If $i,j < N$, then:
$$
\begin{aligned}
D_N \cdot A_N(i,j) =  & \ \int_{\Omega_{N-1}} \int_{-1}^{1} 
(a_i + a_{N-i} a_N)(a_j + a_{N-j} a_N)   |\det(T_{N-1})|  d a_{N} dV \\
= & \  \int_{\Omega_{N-1}}  a_i a_j dV \int_{-1}^{1}  |\det(T_{N-1})|  d a_{N}  \\
 & \ +   \int_{\Omega_{N-1}}  (a_i a_{N-j} + a_j a_{N-i}) dV  \int_{-1}^{1}  a_N |\det(T_{N-1})|  d a_{N}  \\
 & \ +   \int_{\Omega_{N-1}}  a_{N-i} a_{N-j} dV \int_{-1}^{1}  a^2_N  |\det(T_{N-1})|  d a_{N}  \\
  \end{aligned}
 $$
 If $i < N$ and $j = N$, then:
 $$
\begin{aligned}
D_N \cdot A_N(i,N) =  & \ \int_{\Omega_{N-1}} \int_{-1}^{1} 
(a_i + a_{N-i} a_N) a_N   |\det(T_{N-1})|  d a_{N} dV \\
= & \  \int_{\Omega_{N-1}}  a_i  dV \int_{-1}^{1}  a_N  |\det(T_{N-1})|  d a_{N}  \\
 & \ +   \int_{\Omega_{N-1}}  a_{N-i} dV  \int_{-1}^{1} a^2_N |\det(T_{N-1})|  d a_{N}   \end{aligned}
 $$
 If $i = j = N$, then $A_N(i,j)$ is  $1/(N+1)$ if~$N$ is even and $1/(N+2)$ if~$N$ is odd.
 
 \begin{lemma} \label{lemma:over} The inequality $|A_N(i,j)| \le N^3$ holds for all~$i,j,N$.
 \end{lemma}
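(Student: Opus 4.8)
The plan is to bound $|A_N(i,j)|$ by running the recursion just displayed and controlling each term crudely. We already know $|A_N(i)| \le 1$, $|A_N(i,N)| \le$ (a bounded expression) via the second displayed recursion, and the scalar integrals $\frac{D_{N-1}}{D_N}\int_{-1}^1 |\det(T_{N-1})|\,da_N = 1$ (since $D_N = \frac{D_{N-1}}{D_N}\cdot D_N\cdot\!\int\!\cdots$, i.e. this ratio integrates to $1$ by definition of the Fam recursion), while Lemma~\ref{lemma:eq} gives $\frac{D_{N-1}}{D_N}\int_{-1}^1 a_N|\det(T_{N-1})|\,da_N$ and $\frac{D_{N-1}}{D_N}\int_{-1}^1 a_N^2|\det(T_{N-1})|\,da_N$ are each at most $1$ in absolute value (they equal $0$ or $\frac{1}{N+1}$ or $\frac{1}{N+2}$). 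Dividing the first recursion through by $D_N$, we get, for $i,j<N$,
$$
A_N(i,j) = c_0\, A_{N-1}(i,j) + c_1\big(A_{N-1}(i,N-j)+A_{N-1}(j,N-i)\big) + c_2\, A_{N-1}(N-i,N-j),
$$
where $c_0 = \frac{D_{N-1}}{D_N}\int_{-1}^1|\det(T_{N-1})|\,da_N = 1$ and $|c_1|,|c_2|\le 1$. (Here I should double-check that $c_0$ is exactly $1$ and not merely bounded; if it is exactly $1$, the argument below needs the observation that the "new" terms $A_{N-1}(i,N-j)$ etc. have smaller second index, so the recursion is not a naive geometric blow-up but reshuffles indices, and one tracks the $N$-dependence through the number of steps.)

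First I would set $a_N := \max_{i,j}|A_N(i,j)|$ and read off from the recursion the bound $a_N \le a_{N-1} + 2a_{N-1} + a_{N-1} = 4a_{N-1}$ when $i,j<N$; combined with the $i=j=N$ and $i<N=j$ cases (which contribute at most $\max(1, a_{N-1})$-sized terms by the second recursion and Lemma~\ref{lemma:eq}), this gives $a_N \le 4a_{N-1} + 1$, hence $a_N = O(4^N)$ — far worse than $N^3$. So the crude recursion is too lossy, and the main work is to exploit cancellation or the index structure. The key observation I would push is that in the first recursion the three "error" terms all involve $A_{N-1}$ evaluated at index pairs summing to at most $N-1$ in a way that, after unwinding, the relevant coefficient products telescope against the explicit formula $A_N(2m-2i) = \frac{2i+1}{2m+1}\frac{m!^2}{(2m)!}\binom{2i}{i}\binom{2m-2i}{m-i}$. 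Concretely, I would instead prove the sharper statement by writing $A_N(i,j)$ as an explicit sum over lattice paths of products of the beta-integral factors $A_\bullet, B_\bullet$ from Section~\ref{section:evaluate}, bounding each factor by $1$, and bounding the number of terms: each term corresponds to a choice, at each of the $N$ stripping steps, of which of $O(1)$ branches to take, but the contributing terms are constrained (only even indices survive, by the remark $A_N(i)=0$ unless $i$ even), so the count is polynomial — I expect $O(N^2)$ or $O(N^3)$ contributing terms — giving $|A_N(i,j)| \le N^3$.

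The hard part will be getting the term count down from exponential to polynomial: naively each of the $N$ peeling steps multiplies the number of monomials by up to $4$, so one must argue that almost all of those branches either vanish (wrong parity, via $A_N(i) = 0$ for $i$ odd, or via the vanishing half of Lemma~\ref{lemma:eq} in the odd-$N$ case) or collapse together. An alternative, possibly cleaner route: bound $A_N(i,j)$ directly by Cauchy–Schwarz, $|A_N(i,j)| \le \sqrt{A_N(i,i)A_N(j,j)}$, reducing to the diagonal $A_N(i,i) = \frac{1}{D_N}\int_{\Omega_N} a_i^2\,dV$; then the single-index recursion for $A_N(i,i)$ has the same shape but with nonnegative coefficients summing to something controllable, and one shows $A_N(i,i) \le$ (small polynomial) by the same telescoping against the explicit $A_N(\cdot)$ formula together with $\sum_i A_N(2m-2i) = $ a telescoping total. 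I would pursue the Cauchy–Schwarz reduction first, since bounding $\int_{\Omega_N} a_i^2$ only requires understanding a one-parameter family of moments rather than the full bilinear array, and the crude bound $N^3$ leaves plenty of slack; the only subtlety is verifying the recursion coefficients for $A_N(i,i)$ are genuinely summable to $O(1)$ per step rather than growing, which again comes back to Lemma~\ref{lemma:eq} giving the quadratic moment $\frac{1}{N+1}$ rather than $O(1)$.
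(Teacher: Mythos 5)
Your opening move is the right one, but you abandon it for the wrong reason, and that is the gap. In the recursion
$$A_N(i,j) = c_0\, A_{N-1}(i,j) + c_1\bigl(A_{N-1}(i,N-j)+A_{N-1}(j,N-i)\bigr) + c_2\, A_{N-1}(N-i,N-j),$$
you bound $|c_1|,|c_2|$ only by $1$ and conclude $a_N \le 4a_{N-1}+1 = O(4^N)$, declaring the recursion too lossy. But Lemma~\ref{lemma:eq} gives much more than boundedness: $c_1$ and $c_2$ equal $0$, $\tfrac{1}{N+1}$ or $\tfrac{1}{N+2}$, i.e.\ they are $O(1/N)$, while $c_0=1$. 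Hence the same crude induction gives
$$|A_N(i,j)| \le a_{N-1}\left(1 + \frac{3}{N+1}\right),$$
and with the inductive hypothesis $a_{N-1}\le (N-1)^3$ one checks directly that $(N-1)^3\bigl(1+\tfrac{3}{N+1}\bigr) < N^3$; the product $\prod_k\bigl(1+\tfrac{3}{k+1}\bigr)$ grows only like a cube, which is exactly why $N^3$ is the right shape of bound. The boundary cases $j=N$ (using $|A_{N-1}(i)|\le 1$ together with the same small coefficients) and $i=j=N$ (where the value is $\tfrac{1}{N+1}$ or $\tfrac{1}{N+2}$) are immediate. This is the paper's entire proof: no cancellation, no telescoping against the explicit formula for $A_N(\cdot)$, and no term-counting is needed.

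The two alternatives you pivot to do not, as written, close the gap. The lattice-path expansion rests on the unproved claim that only polynomially many branches contribute; as you concede, the naive count is exponential, and you give no mechanism forcing the collapse. The Cauchy--Schwarz reduction $|A_N(i,j)| \le \sqrt{A_N(i,i)A_N(j,j)}$ is valid, but the recursion for the diagonal does not stay on the diagonal (it produces $A_{N-1}(i,N-i)$), so you must either re-apply Cauchy--Schwarz at every step or control off-diagonal terms anyway; and in either case the ingredient that makes the induction converge is precisely the $O(1/N)$ size of the coefficients supplied by Lemma~\ref{lemma:eq} --- the observation you set aside at the outset. Once that is used, the detour is unnecessary.
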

 
 \begin{proof} The result is certainly true for~$N = 1$.
 We proceed by induction. Assume that neither~$i$ nor~$j$ is equal to~$N$.
 By the recurrence relation above, the triangle inequality, and Lemma~\ref{lemma:eq}, we
 deduce that
$$\begin{aligned}
|A_N(i,j)| \le  & \ |A_{N-1}(i,j)| + \frac{1}{N+1}
\left( |A_{N-1}(i,N-j)| + |A_{N-1}(j,N-i)|\right) 
\\ & \ +
\frac{1}{N+1} |A_{N-1}(N-i,N-j)|  \le 
 (N-1)^3 \left(1 + \frac{3}{N+1} \right)  < N^3. \end{aligned}$$
 Something similar (but easier) occurs when either~$i$ or~$j$ is~$N$ (using 
 the  inequality $|A_N(i)| \le 1$ noted above).
\end{proof}

\begin{remark} \emph{It seems  from some light calculation that the inequality
$|A_N(i,j)| \le 2$ (or at least $O(1)$) may hold for all~$N$, 
 although we have not tried very hard to prove this,
because the inequality above completely suffices for our purposes --- indeed all that matters
is that the bound is sub-exponential. 
}
\end{remark}

\begin{lemma} \label{lemma:soup}
The following inequality holds for all~$N$:
$$\frac{1}{D_N} \int_{\Omega_N} \sum_{i=0}^{N} |a_i| \le (N+1)^2 N^3.$$
\end{lemma}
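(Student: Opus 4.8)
The plan is to bound the integral of $\sum_{i=0}^N |a_i|$ over $\Omega_N$ by using the Cauchy--Schwarz inequality to pass from $|a_i|$ to $|a_i|^2$, and then to control the second moments $A_N(i,i) = \frac{1}{D_N}\int_{\Omega_N} a_i^2\, dV$ using the bound already established in Lemma~\ref{lemma:over}. Explicitly, for each index $i$ we write
$$\frac{1}{D_N}\int_{\Omega_N} |a_i|\, dV \le \left(\frac{1}{D_N}\int_{\Omega_N} a_i^2\, dV\right)^{1/2} = |A_N(i,i)|^{1/2},$$
where the first step is Cauchy--Schwarz applied to the probability measure $\frac{1}{D_N} dV$ on $\Omega_N$ (note $\Omega_N$ has finite volume $D_N$, so this is a genuine probability space). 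By Lemma~\ref{lemma:over}, $|A_N(i,i)| \le N^3$, so each term is at most $N^{3/2}$.

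Summing over the $N+1$ values $i = 0, 1, \ldots, N$ then gives
$$\frac{1}{D_N}\int_{\Omega_N} \sum_{i=0}^{N} |a_i|\, dV \le (N+1)\, N^{3/2}.$$
Since $(N+1) N^{3/2} \le (N+1)^2 N^3$ for all $N \ge 1$ (indeed the claimed bound is far from sharp), the lemma follows. One should perhaps note that $a_0 = 1$ on $\Omega_N$, so the $i=0$ term contributes exactly $1$; this only helps.

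There is really no serious obstacle here: the only input beyond elementary inequalities is Lemma~\ref{lemma:over}, whose proof by induction on the recurrence for $A_N(i,j)$ has already been carried out. The one point that deserves a word of care is the application of Cauchy--Schwarz: one must make sure to normalize by $D_N$ so that one is integrating against a probability measure (otherwise an extra factor of $D_N^{1/2}$ appears), which is exactly why the statement of the lemma is phrased with the $\frac{1}{D_N}$ prefactor. The generous form $(N+1)^2 N^3$ of the bound — rather than the sharper $(N+1)N^{3/2}$ — is presumably chosen for convenience in the later application to estimating $F_N$, and costs nothing to prove.
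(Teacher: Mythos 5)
Your proof is correct, and it rests on the same key input as the paper's, namely the second-moment bound $|A_N(i,i)|\le N^3$ of Lemma~\ref{lemma:over}; the only difference is where Cauchy--Schwarz is applied. The paper applies Cauchy--Schwarz over the index set, $\sum_{i=0}^N |a_i| \le (N+1)\bigl(\sum_{i=0}^N |a_i|^2\bigr)^{1/2}$, and then removes the square root pointwise via $\sqrt{x}\le x$ for $x\ge 1$ (valid because $a_0=1$ forces $\sum |a_i|^2\ge 1$), after which integrating gives $(N+1)^2N^3$. You instead apply Cauchy--Schwarz (equivalently Jensen) term by term against the probability measure $\frac{1}{D_N}\,dV$, getting $\frac{1}{D_N}\int_{\Omega_N}|a_i|\,dV \le |A_N(i,i)|^{1/2}\le N^{3/2}$ and hence the sharper bound $(N+1)N^{3/2}$, which trivially implies the stated $(N+1)^2N^3$. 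Your normalization point is exactly right: one must integrate against $\frac{1}{D_N}\,dV$ for the single-function Cauchy--Schwarz step, and your route avoids the $\sqrt{x}\le x$ trick entirely. Since all that matters downstream (in the proof of Theorem~\ref{theorem:cluster}) is a sub-exponential, polynomial-in-$N$ bound, both versions serve equally well, but yours is marginally cleaner and quantitatively stronger.
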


\begin{proof}
By Cauchy--Schwartz,
$$\sum_{i=0}^{N} |a_i| \le (N+1) \sqrt{\sum_{i=0}^{N} |a_i|^2}
\le (N+1) \sum_{i=0}^{N} |a_i|^2,$$
where we use the fact that $\sqrt{x} \le x$ if $x \ge 1$ (note that $a_0 = 1$). It follows that
$$\frac{1}{D_N} \int_{\Omega_N} \sum_{i=0}^{N} |a_i|  \le (N+1) \sum_{i=0}^{N} |A_N(i,i)|,$$
and the result follows from Lemma~\ref{lemma:over}.
\end{proof}

\label{section:proofcluster}
We now prove Theorem~\ref{theorem:cluster}, which we restate now:

\begin{theorem} As $N \rightarrow \infty$, the  roots of a random polynomial in~$\Omega_N$
or~$\Omega^P_N$ are distributed uniformly about the unit circle.
\end{theorem}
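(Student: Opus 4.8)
The plan is to apply the criterion of Hughes--Nikeghbali~\cite{Hughes} directly. That result says that for a sequence of polynomials $P_N(x) = \sum_{i=0}^N a_i x^i$, if the quantity
$$F_N = \log\left(\sum_{i=0}^N |a_i|\right) - \tfrac12 \log|a_0| - \tfrac12 \log|a_N|$$
is $o(N)$, then the roots of $P_N$ equidistribute (both radially and in absolute value) around the unit circle. Since we are working with a probability measure on $\Omega_N$ rather than a fixed polynomial, I would establish that $F_N = o(N)$ \emph{almost surely} (or at least in probability, which is enough for the statement ``a random polynomial''), by controlling each of the three terms on average and then invoking Markov's inequality.

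First I would handle $\log|a_0| = 0$ trivially since $a_0 = 1$ on $\Omega_N$. Second, for the middle term: by Lemma~\ref{lemma:soup}, $\frac{1}{D_N}\int_{\Omega_N} \sum_{i=0}^N |a_i|\, dV \le (N+1)^2 N^3$, so by Markov's inequality, outside an event of probability $\to 0$ we have $\sum_{i=0}^N |a_i| \le N^{10}$, say, hence $\log\left(\sum |a_i|\right) = O(\log N) = o(N)$. Third, for $\tfrac12\log|a_N|$: the computation in section~\ref{section:moments} gives $E(\Omega_N, \log|a_N|) = -\tfrac12(\log 2N + \gamma) + O(1/N)$, which is $O(\log N)$ in expectation; but $\log|a_N|$ can be very negative, so I need a lower tail bound. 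Here I would use the explicit distribution from the lemma in section~\ref{section:moments}: $|a_N|$ has density proportional to $(1-x^2)^m dx$ on $[0,1]$ with $m = \lfloor (N-1)/2\rfloor$. For this density, $\mathbf{P}(|a_N| < \delta) = O(\sqrt{m}\,\delta)$, so taking $\delta = e^{-N^{1/2}}$ (for instance) shows that with probability $\to 1$ we have $|\log|a_N|| \le N^{1/2} = o(N)$. Combining, $F_N = o(N)$ with probability tending to $1$, and \cite{Hughes} gives the conclusion for $\Omega_N$.

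For $\Omega^P_N$, I would observe that $\Omega^P_N$ is obtained from $\Omega_{N-1}$ via the map $P(x) \mapsto t^{N-1}P(x/t)(x-t)$ described in section~\ref{section:perron}, which relates the measures and shows that the pushforward of the $\Omega^P_N$-measure to the coefficients is absolutely continuous with bounded Radon--Nikodym behaviour relative to quantities already controlled; alternatively and more cleanly, one can rerun the same three estimates directly on $\Omega^P_N$. The moment formula $E(\Omega^P_N, |a_N|^{\alpha-1})$ computed in section~\ref{section:perron} again yields $E(\log|a_N|) = O(\log N)$, and the analogous explicit density for $|a_N|$ on $\Omega^P_N$ gives the same lower-tail control; the bound on $\sum|a_i|$ transfers because $\int_{\Omega^P_N} \sum|a_i| \ll N^{O(1)} \int_{\Omega_{N-1}} P(1)\sum|a_i| \ll N^{O(1)}$ by the same Cauchy--Schwarz argument as in Lemma~\ref{lemma:soup} together with Lemma~\ref{lemma:over}. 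So $F_N = o(N)$ a.s.\ on $\Omega^P_N$ as well, and \cite{Hughes} applies verbatim.

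The main obstacle is the lower tail of $\log|a_N|$: the expectation of $\log|a_N|$ being $O(\log N)$ does not by itself prevent $|a_N|$ from being doubly-exponentially small on a small-but-not-negligible set, which would blow up $F_N$. The resolution is that we have the \emph{exact} distribution of $|a_N|$ (density $\propto (1-x^2)^m$), so the probability of $|a_N|$ being exponentially small is itself exponentially small, comfortably beating the $o(N)$ threshold. A secondary (but purely bookkeeping) point is making sure the Hughes--Nikeghbali hypotheses are stated for a single polynomial and transferring to the probabilistic setting via Markov/Borel--Cantelli; since only convergence in probability is needed for the phrase ``a random polynomial,'' even a weak bound suffices and no delicate concentration is required.
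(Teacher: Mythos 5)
Your proposal is correct and follows essentially the same route as the paper: the Hughes--Nikeghbali criterion, with $\log\sum|a_i|$ controlled by Lemma~\ref{lemma:soup} plus Markov's inequality and the $\log|a_N|$ term controlled by the moment computations of \S\ref{section:moments} and \S\ref{section:perron}. The only cosmetic differences are that you invoke the exact density of $|a_N|$ for the lower tail, where the expectation bound already suffices at the $o(N)$ threshold required by~\cite{Hughes}, and that the paper transfers the exceptional-set estimate to $\Omega^P_N$ simply via the containment $\Omega^P_N\subset\Omega_N$ and the volume ratio $\Vol(\Omega^P_N)\sim\Vol(\Omega_N)/N$ rather than rerunning the estimates on $\Omega^P_N$.
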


\begin{proof} We first consider $\Omega_N$. Following~\cite{Hughes}, consider the quantity
$$F_N = \log \left(\sum_{i=0}^{N} |a_i| \right) - \frac{1}{2} \log |a_0| - \frac{1}{2}   \log |a_N|.$$
Note that $a_0 = 1$, so $\log |a_0| = 0$. This also implies that the first term in this sum is non-negative.
On the other hand, certainly $|a_N| \le 1$, so the last term is also non-negative, and $F_N \ge 0$ for
every point of~$\Omega_N$. We have already computed that
$$-E(\Omega_N,\log |a_N|) = \frac{1}{2}\log N + O(1);$$
a similar result holds for~$\Omega^P_N$ by the computation at the end of~\S\ref{section:perron}. 
Let $\Omega_N(100)$ denote the region where the first term in the above expression
for~$F_N$ has absolute value at least
$100 \log N$.  (The constant $100$ is somewhat arbitrary, it is relevant only that
$100 > 5 + 1$.) By Lemma~\ref{lemma:soup} we have
$$(N+1)^2 N^3 \ge \frac{1}{D_N} \int_{\Omega_N} \sum_{i=0}^{N} |a_i|
\ge \frac{1}{D_N} \int_{\Omega_N(100)}N^{100} 
= \frac{\Vol(\Omega_N(100))}{\Vol(\Omega_N)} N^{100}.$$
It follows that the part of $\Omega_N$ where $F_N$ is not between $0$ and $100 \log |N|$
is a vanishingly small part  of $\Omega_N$ for~$N$ large (by a large power of~$N$).
The same is true for~$\Omega^P_N$, because the volume of this latter space is (roughly)~$1/N$
times the volume of~$\Omega_N$. The result then follows from Theorem~1 of~\cite{Hughes}.
\end{proof}

\bibliographystyle{amsalpha}
\bibliography{Perronsmall}

\providecommand{\bysame}{\leavevmode\hbox to3em{\hrulefill}\thinspace}
\providecommand{\MR}{\relax\ifhmode\unskip\space\fi MR }
\providecommand{\MRhref}[2]{%
  \href{http://www.ams.org/mathscinet-getitem?mr=#1}{#2}
}
\providecommand{\href}[2]{#2}
\begin{thebibliography}{DPSZ02}

\bibitem[Aom87]{Aomoto}
K.~Aomoto, \emph{Jacobi polynomials associated with {S}elberg integrals}, SIAM
  J. Math. Anal. \textbf{18} (1987), no.~2, 545--549. \MR{876291 (88h:17016)}

\bibitem[AP14]{Akiyama}
Shigeki Akiyama and Attila Peth{\"o}, \emph{On the distribution of polynomials
  with bounded roots, {I}. {P}olynomials with real coefficients}, Journal of
  the Mathematical Society of Japan \textbf{66} (2014), 927--949.

\bibitem[Dav51]{Dav1}
H.~Davenport, \emph{On a principle of {L}ipschitz}, Journal of the London
  Mathematical Society \textbf{26} (1951), 179--183.

\bibitem[Dav64]{Dav2}
\bysame, \emph{Corrigendum: On a principle of {L}ipschitz}, Journal of the
  London Mathematical Society \textbf{39} (1964), 580.

\bibitem[DPSZ02]{Poonen}
Amir Dembo, Bjorn Poonen, Qi-Man Shao, and Ofer Zeitouni, \emph{Random
  polynomials having few or no real zeros}, J. Amer. Math. Soc. \textbf{15}
  (2002), no.~4, 857--892 (electronic). \MR{1915821 (2003f:60092)}

\bibitem[ET50]{Erdos}
P.~Erd{\"o}s and P.~Tur{\'a}n, \emph{On the distribution of roots of
  polynomials}, Ann. of Math. (2) \textbf{51} (1950), 105--119. \MR{0033372
  (11,431b)}

\bibitem[Fam89]{Fam}
Adly~T. Fam, \emph{The volume of the coefficient space stability domain of
  monic polynomials}, IEEE Symposium on Circuits and Systems \textbf{3} (1989),
  1780--1783.

\bibitem[HN08]{Hughes}
C.~P. Hughes and A.~Nikeghbali, \emph{The zeros of random polynomials cluster
  uniformly near the unit circle}, Compos. Math. \textbf{144} (2008), no.~3,
  734--746. \MR{2422348 (2009c:30017)}

\bibitem[Kac49]{Kac}
M.~Kac, \emph{On the average number of real roots of a random algebraic
  equation. {II}}, Proc. London Math. Soc. (2) \textbf{50} (1949), 390--408.
  \MR{0030713 (11,40e)}

\bibitem[Sel44]{Selberg}
Atle Selberg, \emph{Remarks on a multiple integral}, Norsk Mat. Tidsskr.
  \textbf{26} (1944), 71--78. \MR{0018287 (8,269b)}

\bibitem[SV95]{Shepp}
Larry~A. Shepp and Robert~J. Vanderbei, \emph{The complex zeros of random
  polynomials}, Trans. Amer. Math. Soc. \textbf{347} (1995), no.~11,
  4365--4384. \MR{1308023 (96a:30006)}

\bibitem[Thu]{Thurston}
William Thurston, \emph{Entropy in dimension one}, to appear in the proceedings
  of the Banff conference on Frontiers in Complex Dynamics.

\end{thebibliography}

 \end{document}